\newcommand{\Fr}{\mathrm{Fr}}
\newcommand{\aind}{{\mathrm{ind}}}
\title{A variational  open image theorem in positive characteristic}
\author{Gebhard B\"ockle, Wojciech Gajda  and Sebastian Petersen}
\newtheorem{leer}{}[section]
\newtheorem{thm}[leer]{Theorem}
\newtheorem{prop}[leer]{Proposition}
\newtheorem{lemm}[leer]{Lemma}
\newtheorem{coro}[leer]{Corollary}
\newtheorem{mthm}{Theorem}
\theoremstyle{remark}    
\newtheorem{rema}[leer]{Remark}
\newcommand{\DD}{{\mathscr{D}}}
\newcommand{\XX}{{\mathscr{X}}}
\newcommand{\elli}{{\ell^\infty}}
\newcommand{\uG}{{\underline{G}}}
\newcommand{\uD}{{\underline{D}}}
\newcommand{\cD}{{\underline{\DD}}}
\newcommand{\gen}{{\mathrm{gen}}}
\newcommand{\Lll}{{\mathbb{L}'}}
\DeclareMathOperator{\Gal}{Gal}
\DeclareMathOperator{\Spec}{Spec}
\newcommand{\GL}{\underline{\mathrm{GL}}}
\newcommand{\trdeg}{\mathrm{trdeg}}
\newcommand{\ol}[1]{\overline{#1}}
\newcommand{\ul}[1]{\underline{#1}}
\DeclareMathOperator{\chara}{char}
\newcommand{\scc}{\mathrm{sc}}
\newcommand{\sss}{\mathrm{ss}}
\def\Ff{{\mathbb{F}}}
\def\Qq{{\mathbb{Q}}}
\def\Zz{{\mathbb{Z}}}
\def\Aa{{\mathbb{A}}}
\def\Nn{{\mathbb{N}}}
\begin{document}
\keywords{Compatible system, adelic openness, positive characteristic.}
\subjclass[2010]{11G10, 14K15}

\parindent0em
\parskip.5em

\maketitle

\begin{abstract} In this note we prove a variational open adelic image theorem for the Galois action on the cohomology of smooth proper $S$-schemes where $S$ is a smooth variety over a finitely generated field of positive characteristic. A central tool is a recent result of Cadoret, Hui and Tamagawa.
\end{abstract}

\section*{Introduction}
Let $k$ be a finitely generated infinite field of characteristic $p>0$,
$S$ a smooth geometrically connected $k$-variety of positive dimension and $f: \mathscr{X}\to S$ a smooth proper morphism of 
schemes. Let $K=k(S)$ be the function field of $S$ and let
$X/k(S)$ be the generic fibre of $\mathscr{X}$. 
Fix $j\in\Nn$.  For every prime number $\ell\neq p$ we define
$V_\ell:=H^j(X_{\ol{K}}, \Qq_\ell)$ and let 
$$\rho_\elli: \pi_1(S)\to \GL_{V_\ell}(\Qq_\ell)$$
be the representation of $\pi_1(S)$ on the $\Qq_\ell$-vector space $V_\ell$. 
We write
$$\rho: \pi_1(S)\to \prod_{\ell\neq p}  \GL_{V_\ell}(\Qq_\ell)$$
for the induced adelic representation $\prod_{\ell\neq p}\rho_\elli$. 

For every
point $s\in S$ with residue field $k(s)$
we denote by $s_*: \Gal(k(s))\to \pi_1(S)$ the homomorphism induced by $s$
(well-defined up to conjugation), and  for any group homomorphism $\tau: \pi_1(S)\to H$,
we define $\tau_s:=\tau\circ s_*: \Gal(k(s))\to  H$ as the {\em specialization of $\rho$ at~$s$}.
Note that $\rho_{\elli, s}$ is isomorphic to the representation of $\Gal(k(s))$ on 
$H^j(X_{s, \ol{k(s)}}, \Qq_\ell)$ where $X_s=\mathscr{X}\times_S \Spec(k(s))$ is the 
special fibre of $\mathscr{X}$
in $s$, see~\cite[VI.~Cor.~4.2]{milne}. 

The aim of this paper is to study the variation of the monodromy groups
$\rho_{\elli, s}(\Gal(k(s)))$ (resp. $\rho_{s}(\Gal(k(s)))$) 
for closed points $s\in S$ in comparing them to the
corresponding monodromy group $\rho_{\elli}(\pi_1(S))$ (resp. $\rho(\pi_1(S))$) of the 
generic point of $S$.

For every prime number $\ell\neq p$ let $\uG(\rho_{\elli, s})$ (resp.
$\uG(\rho_{\elli})$)
 be the connected component of the Zariski closure of
$\rho_{\elli, s}(\Gal(k(s)))$ (resp. of
$\rho_{\elli}(\pi_1(S))$)
in $\GL_{V_\ell}/\Qq_\ell$, and define
$$S^\gen(\rho_{\elli})= \{s\in S\ \mbox{a closed point}: \uG(\rho_{\elli, s})=\uG(\rho_{\elli})\}$$
Being in $S^\gen(\rho_\elli)$ is a priori weaker than being $\ell$-Galois generic in the sense of Cadoret-Kret (see~\cite[3.1]{cadoret-kret}, \cite[1.5.3]{cadoretadelic} and \cite[\S~6]{pink}). By Theorem~\ref{thm-main-intro}(c), however, the notions are equivalent.

The following result is the main theorem of the present work.
\begin{mthm} (see Proposition \ref{indep}, Lemma~\ref{Lem-genbig}, Theorem~\ref{mainthm})
\label{thm-main-intro}
\begin{enumerate}
\item[(a)] The sets $S^\gen(\rho_{\elli})$ are independent of $\ell$.\! Let $S^\gen(\XX/S):=S^\gen(\rho_{\elli})$ for any $\ell\neq p$.
\item[(b)] The set $S^\gen(\XX/S)$ is Zariski dense in $S$, and in particular it is infinite.
\item[(c)] The group $\rho_s(\Gal(k(s)))$ is open in $\rho (\pi_1(S))$ for every $s\in 
S^\gen(\XX/S)$. 
\end{enumerate}
\end{mthm}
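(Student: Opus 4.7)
Part (c) is the heart of the theorem; parts (a) and (b) are structural preparation that produces a single locus $S^\gen(\XX/S)$ witnessing $\ell$-Galois genericity uniformly in $\ell$, after which the Cadoret--Hui--Tamagawa theorem (CHT) delivers $\ell$-adic openness prime by prime and a compatibility/Goursat argument upgrades this to the adelic statement. For (a), the map $\rho_\elli\circ s_*$ gives, up to conjugation, an inclusion $\rho_{\elli,s}(\Gal(k(s)))\subseteq \rho_\elli(\pi_1(S))$, so $\uG(\rho_{\elli,s})$ sits inside a conjugate of $\uG(\rho_\elli)$ and equality of connected components reduces to equality of dimensions. Both dimensions are $\ell$-independent because $(\rho_\elli)_\ell$ and $(\rho_{\elli,s})_\ell$ are compatible systems of geometric origin (weight theory together with results of Chin on compatible systems in positive characteristic). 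For (b), I would apply CHT at one fixed prime $\ell\neq p$ to obtain a Zariski dense locus of $\ell$-Galois generic closed points of $S$, then invoke (a) to identify this locus with $S^\gen(\XX/S)$.

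\textbf{Part (c).} Fix $s\in S^\gen(\XX/S)$. For every $\ell\neq p$, part (a) combined with the CHT open-image theorem forces $\rho_{\elli,s}(\Gal(k(s)))$ to be open in $\rho_\elli(\pi_1(S))$. I would then upgrade this prime-by-prime openness to adelic openness of $\rho_s(\Gal(k(s)))$ inside $\rho(\pi_1(S))$ in three steps. First, split off the abelianization of $\rho(\pi_1(S))$: the image there is essentially cyclotomic/Kummer and can be handled by direct Serre-type arguments in positive characteristic. Second, for almost every prime $\ell$, use the compatibility of $(\rho_\elli)_\ell$ together with the uniform residual bounds supplied by CHT to show that the reduction of $\rho_{\elli,s}(\Gal(k(s)))$ mod $\ell$ already contains the commutator subgroup of the generic mod-$\ell$ image, which for almost all $\ell$ is a product of simple Chevalley groups of bounded type with no small non-trivial quotients. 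Third, combine these residual surjections across primes by a Goursat/Nori-type argument to conclude that the residual deficiencies cannot accumulate across the infinitely many primes, which yields the desired adelic openness.

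\textbf{Main obstacle.} The decisive difficulty is the third step: the passage from $\ell$-adic openness at each prime to adelic openness is the positive-characteristic analogue of Serre's adelic openness theorem for abelian varieties, and it requires both a uniform bound on the residual deficiency at almost every prime and an independence statement across different primes in the sense of Goursat. This is precisely where the full strength of the Cadoret--Hui--Tamagawa input is expected to be indispensable; the first two steps above, by contrast, are essentially formal consequences of compatibility together with standard structural facts about reductive groups and their residual images.
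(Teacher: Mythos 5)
Your outline for (b) rests on a misattribution that leaves a genuine gap: the Cadoret--Hui--Tamagawa results concern semisimplicity of the mod-$\ell$ reductions and maximality of the \emph{geometric} monodromy images for $\ell\gg 0$; they say nothing about the existence, let alone Zariski-density, of closed points $s$ with $\uG(\rho_{\elli,s})=\uG(\rho_\elli)$. The density in the paper comes from an entirely different mechanism (Lemma~\ref{Lem-genbig}): the Frattini subgroup $\Phi_\ell$ of the compact $\ell$-adic analytic group $G=\rho_\elli(\pi_1(S))$ is open, so genericity at $s$ follows from surjectivity of the finite quotient representation $\pi_1(S)\to G/\Phi_\ell$ at $s$, and the set of such $s$ is dense because the finitely generated infinite field $k$ is Hilbertian; this is exactly where the hypotheses on $k$ and $\dim S>0$ enter, and nothing in your sketch replaces it. The gap also feeds back into (a): the $\ell$-independence of dimensions you invoke (Chin, Larsen--Pink) is available for the \emph{semisimple geometric} monodromy groups $\uD_\elli$ and, in the first instance, only over transcendence degree $1$; the paper needs the density statement to specialize to a curve (Corollary~\ref{coro-cprop}) and needs Lemma~\ref{Lem-GalGen} to show that arithmetic genericity at $s$ is equivalent to geometric genericity at $\bar s$ (the abelian quotient $\uG/\uD$ being filled up by Frobenius). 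Your reduction of (a) to ``equality of dimensions'' does not address this arithmetic/geometric discrepancy.

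For (c), the prime-by-prime openness at every fixed $\ell$ is likewise not a CHT statement: it follows from Serre's theorem that $\rho_\elli(\Gal(\ol\Ff_p K))$ is open in $\uD_\elli(\Qq_\ell)$ (Theorem~\ref{Thm-serrebrisgen}(c)) combined with $\uD_{\elli,s}=\uD_\elli$. CHT enters only to give, for $\ell\gg0$, the equality $\rho_{\elli,s}(\Gal(\ol\Ff_p k(s)))^+=\rho_\elli(\Gal(\ol\Ff_p K))=\cD_\elli(\Zz_\ell)^+$. The decisive missing idea in your third step is how the single-prime information is glued: the paper does not run a Goursat/Nori argument across primes, but quotes the independence theorem of B\"ockle--Gajda--Petersen (also Cadoret--Tamagawa, Theorem~\ref{Thm-connext}(c)) asserting that after replacing $K$ by a finite extension $K_\aind$ the geometric adelic image is the full direct product $\prod_{\ell\in\Lll}\rho_\elli(\Gal(\ol\Ff_p K_\aind))$; given this product structure and the CHT equality at almost all $\ell$, adelic openness is immediate from the product topology, and the arithmetic part is handled simply by the finiteness of $k(s)/k$ acting through the quotient of $\Gal(\ol\Ff_p k/k)\cong\hat\Zz$ (no cyclotomic/Kummer analysis is involved). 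Your proposed ``residual commutator plus Goursat'' step would in effect have to reprove that independence theorem, and as written it is only a plan; so the passage from $\ell$-adic to adelic openness remains a genuine gap in the proposal.
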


The above result relies on a similar result that holds if one replaces $S$ by its base change $S_{\ol{\Ff}_p k}$ under $k\to \ol\Ff_p k$. This base change allows us to apply standard tools to derive (a) and (b), and recent results from \cite{ctss} by Cadoret, Hui and Tamagawa to deduce (c). Given these results, the proof of Theorem~\ref{thm-main-intro} is rather elementary.

In the case where $k$ is a finitely generated field of zero characteristic, Cadoret established a theorem analogous to our main theorem in the case where $\XX/S$ is an abelian scheme (cf.~\cite{cadoretadelic}). Her theorem offers a strong tool to reduce the proof of conjectures about Galois representations attached to abelian varieties over finitely generated fields of zero characteristic to the number field case. Similarly our theorem can be used in order to reduce the proof of 
conjectures about smooth proper varieties over finitely generated fields of positive characteristic to the 
case where the ground field is a global function field. 

The results from \cite{ctss} also give one a rather precise conceptual description of $\rho(\pi_1(S_{\ol{\Ff}_pk}))$, as we shall explain in Section~\ref{LHSection}, and as is presumably well-known to the authors of \cite{ctss}: Let $\ul{D}_\elli$ denote the connected component of the Zariski closure of $\rho_{\elli}(\pi_1(S_{\ol{\Ff}_pk}))$, or, equivalently, the derived group of $\uG(\rho_{\elli})$ (see~Corollary~\ref{cor-derived}). In Theorem~\ref{LHTheorem} we prove that for $s\in S^\gen(\XX/S)$ the group $\rho_s(\Gal(\overline{\Ff}_pk(s)))$ generates a special adelic subgroup in $\prod_{\ell\neq p} \ul{D}^{\mathrm{sc}}_\elli(\Qq_\ell)$ in the sense of Hui and Larsen \cite{huilarsen}, where $ \ul{D}^{\mathrm{sc}}_\elli\to  \ul{D}_\elli$ denotes the simply connected cover. 

After submitting our manuscript to ArXiv we were informed by Anna Cadoret that she has a manuscript \cite{cadoretadelic2} with similar results, now also available on her homepage.

{\bf Acknowledgements} 

Part of this work was done during a stay of the three authors at Adam Mickiewicz university in Pozna\'n financed by NCN grant no. UMO-2014/15/B/ST1/00128. G.B. was supported by the DFG within the SPP1489 and the FG1920. 
A previous version of this work addressed only the case of abelian varieties over a base variety in positive characteristic. We thank the referee of that version for much valuable feedback that is also relevant for the present generalization.

\section*{Notation}
For a field $k$ we denote by $\ol{k}$ an algebraic closure and by $\Gal(k)$ the absolute Galois group of $k$. For a $k$-variety $S$ we denote by $k(S)$ its function field, by $|S|$ its set of closed points, and by $\pi_1(S)$ the \'etale fundamental group of $S$ with base point the geometric generic point $\Spec(\ol{k(S)})\to S$. If $\chara k=p$, let $S_{\ol\Ff_p k}$ denote the base change of $S$ under $k\to \ol\Ff_p k$.

Suppose that $V$ is a finite-dimensional $\Qq_\ell$-vector space, $\Pi$ is a profinite group
and $\rho: \Pi\to  \GL_{V}(\Qq_\ell)$ is a continuous homomorphism. We denote by $\uG(\rho)$ the connected component of the Zariski closure of
$\rho(\Pi)$ in $\GL_{V}$. Then $\uG(\rho)$ is an algebraic group over $\Qq_\ell$. It is reductive if $\rho$ is semisimple (see~\cite[22.138]{milne-iag}). We write $\rho|H$ for the restriction of $\rho$ to a subgroup $H$ of $\Pi$, and we denote by $\Pi^+$ the closed subgroup of $\Pi$ generated by its pro-$\ell$ Sylow subgroups.

If $\Pi=\pi_1(S)$, then we define the {\em set of Galois generic points with respect to~$\rho$} as
$$S^\gen(\rho):=\{s\in |S|: \uG(\rho)=\uG(\rho_s)\}.$$
 By $\Lll$ we denote the set of all prime numbers $\ell\neq p$.
 For a linear algebraic group $\ul{G}$ defined over a field, we denote by $\cD\ul{G}$ its derived group.

Let $K$ be a finitely generated field of characteristic $p> 0$.
We call a family $(\rho_\elli: \Gal(K)\to \GL_{V_\ell}(\Qq_\ell))_{\ell\in\Lll}$ of continuous homomorphisms, where the 
$V_\ell$ are finite-dimensional $\Qq_\ell$-vector spaces, a {\em strictly compatible system over $K$ pure of weight $j$} if there exists a smooth $\Ff_p$-variety $T$ with $\Ff_p(T)=K$ such that the following properties (i) and (ii) hold: 
\begin{itemize}
\item[(i)] $\rho_\elli$ factors through $\pi_1(T)$ for every $\ell\in\Lll$.
\item[(ii)] For every $t\in |T|$, denoting by $\Fr_t\in\Gal(\Ff_p(t))$ the arithmetic Frobenius $x\mapsto x^{\frac{1}{|\Ff_p(t)|}}$, the characteristic polynomial of $\rho_{\elli, t}(\Fr_t)$ has coefficients in $\Zz$, it is independent of $\ell$, and its roots all have absolute value $|\Ff_p(t)|^{\frac{j}{2}}$.
\end{itemize}

\section{Preliminaries}\label{PrelimSec}
In this section we collect basic results, mostly not due to the present authors, for use in later sections.
Let $K$ be a finitely generated infinite field of characteristic $p>0$ and $X/K$ a smooth proper scheme. 
Fix $j\in\Nn$ and define for $\ell\in\Lll$ 
$$\begin{array}{rcl}
V_\ell(X)&:=&H^j(X_{\ol{K}}, \Qq_\ell)\ \mbox{and}\\
T_\ell(X)&:=&H^j(X_{\ol{K}}, \Zz_\ell)/(\mathrm{Torsion}).\\
\end{array}$$
Then $V_\ell(X)$ is a finitely generated $\Qq_\ell$-vector space, $T_\ell(X)$ is a finitely generated free 
$\Zz_\ell$-module and $V_\ell(X)=T_\ell(X)\otimes \Qq_\ell$ for all $\ell\in\Lll$. Let $\rho_\elli$ be the 
representation of $\Gal(K)$ on $V_\ell(X)$. 

By Deligne's theorem on the Weil conjectures (cf.~\cite[Thm.~1.6]{deligne-weil1}), standard spreading-out principles, and proper-smooth base change (cf.~\cite[VI.~Cor.~4.2]{milne}), one has the following result.

\begin{thm}(Deligne)
\label{Thm-CompatSys}
The family of representations $(\rho_\elli)_{\ell\in\Lll}$ is a strictly compatible system over $K$ pure of weight $j$.
\end{thm}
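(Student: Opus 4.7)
The plan is to produce a smooth $\Ff_p$-variety $T$ with $\Ff_p(T)=K$ over which $X$ extends as a smooth proper family, and then to verify conditions (i) and (ii) of the definition of a strictly compatible system using proper-smooth base change together with Deligne's Weil~I applied fibrewise. The statement is essentially a packaging of well-known inputs; the only point requiring more than a line's thought is the $\ell$-independence of the characteristic polynomial on each individual $H^j$, which needs a weight-separation argument once purity is available.

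First, since $K$ is finitely generated over $\Ff_p$, one can choose an integral smooth $\Ff_p$-variety $T_0$ with $\Ff_p(T_0)=K$. Standard spreading-out then furnishes a dense open $T\subseteq T_0$ together with a smooth proper morphism $f\colon \mathcal{Y}\to T$ whose generic fibre is $X/K$. By proper-smooth base change, $R^j f_*\Qq_\ell$ is a lisse $\Qq_\ell$-sheaf on $T$ for every $\ell\in\Lll$; its geometric generic stalk is canonically $V_\ell(X)$, and the corresponding continuous representation of $\pi_1(T)$ pulls back along the canonical surjection $\Gal(K)\twoheadrightarrow\pi_1(T)$ to $\rho_\elli$. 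This establishes (i).

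For (ii), fix $t\in|T|$. Proper-smooth base change, as recorded in \cite[VI.~Cor.~4.2]{milne}, identifies $\rho_{\elli,t}$ with the natural $\Gal(\Ff_p(t))$-representation on $H^j((\mathcal{Y}_t)_{\overline{\Ff_p(t)}},\Qq_\ell)$, where $\mathcal{Y}_t:=\mathcal{Y}\times_T\Spec\Ff_p(t)$ is smooth and proper over the finite field $\Ff_p(t)$. Deligne's Weil~I, \cite[Thm.~1.6]{deligne-weil1}, then shows that every eigenvalue of $\Fr_t$ on this cohomology group is an algebraic integer of complex absolute value $|\Ff_p(t)|^{j/2}$, which is the purity part of (ii). To obtain the $\ell$-independence of the characteristic polynomial on $H^j$, one applies the Grothendieck--Lefschetz trace formula, which expresses the traces of powers of $\Fr_t$ on the alternating sum of cohomology groups as point counts of $\mathcal{Y}_t$ over finite extensions of $\Ff_p(t)$, hence as $\ell$-independent integers; purity then isolates the contribution of each $H^k$ by its archimedean absolute value. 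The resulting $\ell$-independent characteristic polynomial on $H^j$ has coefficients lying in $\overline{\Zz}\cap\Qq=\Zz$, completing (ii).
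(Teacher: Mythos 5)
Your proposal is correct and follows exactly the route the paper indicates: spreading out over a smooth $\Ff_p$-model, proper-smooth base change to identify the specializations, Deligne's Weil~I for purity, and the Grothendieck--Lefschetz trace formula plus weight separation for the $\ell$-independence and integrality of the characteristic polynomials. The paper itself gives no more than a one-sentence citation of these same ingredients, so your write-up is simply a fleshed-out version of its argument.
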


The following combines results from  \cite{bgp}, \cite{ct}, \cite{lp}  and \cite{serretoribet1}.
\begin{thm} 
\label{Thm-connext} There exists a finite Galois extension $K_\aind/K$
with the following properties. 
\begin{enumerate}
\item[(a)] For all $\ell\in\Lll$, one has 
\[\hbox{$\rho_{\elli}(\Gal(K_\aind))\subset \uG(\rho_\elli)(\Qq_\ell)$ and $\rho_{\elli}(\Gal(\ol{\Ff_p} K_\aind))\subset \uG(\rho_\elli|\Gal(\ol{\Ff_p} K_\aind) )(\Qq_\ell)$.}\]
\item[(b)] One has $\rho_\elli(\Gal(\ol{\Ff}_p K_\aind))=\rho_\elli(\Gal(\ol{\Ff}_p K))^+$ for all $\ell\gg0$ in $\Lll$.
\item[(c)] If $\rho: \Gal(K)\to \prod_{\ell\in\Lll} \GL_{T_\ell(X)} (\Zz_\ell)$
is the homomorphism induced by $\prod_{\ell\in\Lll}\rho_{\elli}$, then 
$$\rho(\Gal(\ol{\Ff}_p K_\aind))=\prod_{\ell\in\Lll} \rho_{\elli} (\Gal(\ol{\Ff}_p K_\aind)).$$
\item[(d)] If $K=k(S)$ for 
a smooth
 $k$-variety $S$ such hat $\rho$ factors via $\pi_1(S)$, then one can further require that there exists a connected finite \'etale cover $S_\aind$ of $S$ such that $K_\aind=k(S_\aind)$.
\end{enumerate}
\end{thm}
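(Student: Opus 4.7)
My plan is to construct $K_\aind$ as the compositum of finitely many finite Galois extensions of $K$, each arranged so as to secure one of the properties (a)--(c) uniformly in $\ell\in\Lll$, and then to deduce (d) from the standard dictionary between finite Galois extensions of $K=k(S)$ through which $\rho$ factors and connected finite \'etale Galois covers of~$S$. The key input that makes each step work uniformly in $\ell$ is the strict compatibility of $(\rho_\elli)_{\ell\in\Lll}$ from Theorem~\ref{Thm-CompatSys}, which forces the Zariski closures and their component groups to be coordinated across the various~$\ell$.

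For property (a), for any fixed $\ell$ the preimage of $\uG(\rho_\elli)(\Qq_\ell)$ in $\Gal(K)$ is open of finite index and hence cuts out a finite Galois extension of $K$; similarly for the geometric version. The nontrivial point is to find one finite extension $K_1/K$ that works for all $\ell$ at once: this uses results of Serre and Larsen--Pink (\cite{lp}, \cite{serretoribet1}) giving that the component groups $\rho_\elli(\Gal(K))/\uG(\rho_\elli)(\Qq_\ell)$ are of uniformly bounded order and in fact all trivialized by a single finite extension. For (b) I would then invoke Larsen--Pink together with \cite{ct} and \cite{bgp}: for $\ell\gg0$ the geometric image $\rho_\elli(\Gal(\ol\Ff_p K))$ is a compact $\ell$-adic Lie subgroup of a reductive $\Qq_\ell$-group, and the quotient by the subgroup generated by its pro-$\ell$ Sylow subgroups is a finite group of order prime to $\ell$ whose arithmetic origin is again $\ell$-uniform; a further finite Galois extension $K_2/K_1$ annihilates these cokernels simultaneously for all $\ell\gg0$.

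For (c) I would apply the independence-of-$\ell$ theorem in \cite{serretoribet1} (the positive-characteristic analogue of Serre's classical result) to the strictly compatible geometric system: after a final finite Galois enlargement $K_\aind/K_2$ the product map $\rho(\Gal(\ol\Ff_pK_\aind))\to\prod_{\ell\in\Lll}\rho_\elli(\Gal(\ol\Ff_pK_\aind))$ becomes an equality, since trivializing all inter-$\ell$ obstructions reduces surjectivity to surjectivity of each projection, which is automatic. Finally (d) is then immediate: under the assumption that $\rho$ factors through $\pi_1(S)$ for $S$ smooth, the open normal subgroup of $\pi_1(S)$ corresponding to $K_\aind$ defines a connected finite \'etale Galois cover $S_\aind\to S$ with $k(S_\aind)=K_\aind$. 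The main obstacle is that each of (a)--(c) is intrinsically an $\ell$-by-$\ell$ construction, whereas the theorem demands one and the same finite Galois extension for all $\ell\in\Lll$; resolving this uniformity is precisely what the cited $\ell$-independence results accomplish, and once they are invoked the remaining work is only to combine finitely many finite Galois extensions.
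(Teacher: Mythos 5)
Your overall strategy---building $K_\aind$ as a compositum of finitely many finite Galois extensions, one securing each of (a)--(c), with the uniformity in $\ell$ supplied by $\ell$-independence results---is the same as the paper's, which treats the theorem as a citation assembly: Serre's letter and Larsen--Pink (\cite{serretoribet1}, \cite[Prop.~6.14]{lp1}, \cite[2.2]{lp}) for (a), and \cite[Thm.~7.7]{bgp} (or alternatively \cite{ct}) for (b) and (c) simultaneously. One attribution needs repair: for (c) you invoke ``the independence-of-$\ell$ theorem in \cite{serretoribet1}'', but Serre's letters contain no such statement over function fields of positive characteristic; the assertion that $\rho(\Gal(\ol\Ff_p K_\aind))$ equals the full product $\prod_{\ell\in\Lll}\rho_\elli(\Gal(\ol\Ff_p K_\aind))$ after a finite extension is precisely the main theorem of \cite{bgp} (or \cite{ct}), and your gloss that ``trivializing all inter-$\ell$ obstructions reduces surjectivity to surjectivity of each projection'' is a restatement of what that theorem provides rather than an argument for it.

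More seriously, your argument for (d) has a genuine gap: there is in general no ``open normal subgroup of $\pi_1(S)$ corresponding to $K_\aind$''. The extensions produced in (a)--(c) are abstract finite Galois extensions of $K$ and need not be unramified over $S$; the hypothesis that $\rho$ factors through $\pi_1(S)$ does not force $\Gal(K_\aind)$ to contain $\ker(\Gal(K)\to\pi_1(S))$, so $k(S_\aind)=K_\aind$ cannot simply be asserted for the field you constructed. The paper's fix is to let $S_\aind$ be the connected finite \'etale cover of $S$ corresponding to the open normal subgroup of $\pi_1(S)$ that is the preimage of $\rho(\Gal(K_\aind))$ under $\rho\colon\pi_1(S)\to\rho(\pi_1(S))$, and then to \emph{replace} $K_\aind$ by $k(S_\aind)$. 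This works because properties (a)--(c) depend only on the images $\rho(\Gal(K_\aind))$ and $\rho(\Gal(\ol\Ff_p K_\aind))$, which one checks are preserved under this replacement. You need to add this replacement step (including the verification for the geometric images) to close the argument.
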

\begin{proof}
The existence of a finite Galois extension $K'/K$ such that the first half of (a) holds is due to Serre (see~\cite[2nd letter]{serretoribet1}); see also \cite[Prop.~6.14]{lp1}. For the second half of (a) this follows from \cite[2.2]{lp} by Larsen and Pink. It follows from \cite[Thm.~7.7]{bgp}, or alternatively from \cite{ct}, that after replacing $K'$ 
by a larger finite Galois extension $K_\aind$ of $K$ also (b) and (c) hold.

Let $S_\aind$ be the connected finite \'etale cover of $S$ corresponding to the quotient 
$\rho(\Gal(K_\aind))$ of $\pi_1(S)$. Then (a)--(d) hold if 
we replace $K_\aind$ by $k(S_\aind)$.
\end{proof}

\begin{thm}(Deligne, Grothendieck, Serre)
\label{Thm-serrebrisgen}
\begin{enumerate}
\item[(a)] The restriction $\rho_\elli|\Gal(\ol{\Ff}_p K)$ is semisimple.
\item[(b)] The group $\uG(\rho_\elli|\Gal(\ol{\Ff}_p K))$ is semisimple.
\item[(c)] The group $\rho_\elli(\Gal(\ol{\Ff}_p K_\aind))$ is open in $\uG(\rho_\elli|\Gal(\ol{\Ff}_p K))$. 
\end{enumerate}
\end{thm}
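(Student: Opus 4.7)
My plan is to prove the three items in order, reducing each to a classical result.

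For (a), the key input is Theorem~\ref{Thm-CompatSys}: the family $(\rho_\elli)_{\ell\in\Lll}$ is strictly compatible and pure of weight $j$. Hence on a suitable smooth $\Ff_p$-variety $T$ with $\Ff_p(T)=K$, the representation $\rho_\elli$ arises from a lisse $\Qq_\ell$-sheaf on $T$ that is pure of weight $j$. Deligne's semisimplicity theorem from Weil~II (\cite{deligne-weil1}) then says that the restriction of any such sheaf to $\pi_1(T_{\ol{\Ff}_p})$ is semisimple, which translates directly to semisimplicity of $\rho_\elli|\Gal(\ol{\Ff}_p K)$.

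For (b), part (a) guarantees that $\uG := \uG(\rho_\elli|\Gal(\ol{\Ff}_p K))$ is reductive, so the task is to rule out a nontrivial central torus, i.e.\ to show that any character $\chi:\uG\to \Gg_m$ is trivial on the connected component. The plan is to run the Larsen--Pink weight argument (cf.~\cite{lp}): $\chi\circ \rho_\elli$ defines a one-dimensional constituent of an algebraic tensor construction on $V_\ell$, which by strict compatibility is again pure of some integer weight $w$. Evaluating at Frobenius elements $\Fr_t$ for closed points $t\in|T|$ and using that $\rho_\elli(\Gal(\ol{\Ff}_p K_\aind))$ is compact (after the finite field extension granted by Theorem~\ref{Thm-connext}) forces $w=0$ and the induced character of $\Gal(\ol{\Ff}_p K)$ to have finite image. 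By Zariski-density of this image in $\uG$, $\chi$ must be trivial on the connected component, so $\uG$ is semisimple.

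For (c), the plan is to combine (a), (b), and Theorem~\ref{Thm-connext}(a) with Serre's openness theorem for semisimple $\ell$-adic groups. Since $\ol{\Ff}_p K_\aind/\ol{\Ff}_p K$ is finite, $\uG(\rho_\elli|\Gal(\ol{\Ff}_p K_\aind))$ agrees with $\uG$, so by Theorem~\ref{Thm-connext}(a) we have $\rho_\elli(\Gal(\ol{\Ff}_p K_\aind))\subset \uG(\Qq_\ell)$; by the very definition of the connected component this compact subgroup is Zariski-dense in $\uG$. Since $\uG$ is semisimple by (b), Serre's theorem that any compact Zariski-dense subgroup of a semisimple algebraic group over $\Qq_\ell$ is open (see the references collected in \cite{bgp}) then finishes the argument.

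The main obstacle is part (b): reductivity falls out of (a) immediately, but excluding a central torus requires a genuinely arithmetic input via purity, and is the technical heart of the Larsen--Pink type argument; parts (a) and (c) are essentially clean invocations of Weil~II and of Serre's openness theorem, respectively.
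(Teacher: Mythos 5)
The paper itself does not reprove any of (a)--(c); it handles the theorem purely by citation: (a) is Deligne (Weil~II, Cor.~3.4.13), (b) is Grothendieck's theorem as given in Weil~II, Cor.~1.3.9, and (c) is Serre \cite{serre67}. Your treatment of (a) and (c) matches this in substance: (a) is exactly the Weil~II geometric semisimplicity statement for pure lisse sheaves (note your citation should be \cite{deligne-weil2}, not \cite{deligne-weil1}), and your argument for (c) --- image inside $\uG(\Qq_\ell)$ by Theorem~\ref{Thm-connext}(a), Zariski density because passing to the finite-index subgroup $\Gal(\ol{\Ff}_p K_\aind)$ does not change the identity component of the Zariski closure, compactness, and then Serre's openness theorem for compact Zariski-dense subgroups of connected semisimple $\Qq_\ell$-groups --- is precisely the content behind the citation of Serre.

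Part (b), however, has a genuine gap. First, your character $\chi$ lives on the geometric monodromy group $\uG(\rho_\elli|\Gal(\ol{\Ff}_p K))$, so $\chi\circ\rho_\elli$ is only defined on the geometric Galois group, which contains no Frobenius elements $\Fr_t$; to invoke purity you must first replace $\chi$ (or the corresponding eigenline in a tensor construction on $V_\ell$) by something stable under the arithmetic group, and this needs an argument, since the arithmetic group only permutes the $\uG$-isotypic lines. Second, and more seriously, the inference ``compactness of $\rho_\elli(\Gal(\ol{\Ff}_p K_\aind))$ together with weight $0$ forces the geometric character to have finite image'' is not valid: a compact subgroup of $\Zz_\ell^\times$ such as $1+\ell\Zz_\ell$ is infinite, and purity only constrains archimedean absolute values of Frobenius eigenvalues at closed points, i.e.\ the arithmetic action; it says nothing directly about the image of the geometric group under $\chi$. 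Finiteness of the geometric image in the maximal torus quotient of $\uG$ is exactly the content of Grothendieck's theorem (Weil~II, 1.3.8/1.3.9), and its proof requires a further global input beyond purity formalism: Frobenius acts on the ($\ell$-part of the) abelianized geometric fundamental group with eigenvalues of nonzero weight (Weil~I for $H^1$), so its Frobenius-coinvariants are finite, which is what excludes an infinite abelian quotient of the geometric image. Without that ingredient --- or without simply citing Weil~II, Cor.~1.3.9, as the paper does --- your proof of (b) does not close.
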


\begin{proof} Part (a) is \cite[Cor. 3.4.13]{deligne-weil2} and due to Deligne. Part (b) is attributed by Deligne to Grothendieck and given in \cite[Cor.~1.3.9]{deligne-weil2}. Part (c) is due to Serre (cf.~\cite{serre67}).
\end{proof}

\begin{coro}\label{cor-derived}
We have $\cD\uG(\rho_\elli)=\uG(\rho_\elli|\Gal(\ol{\Ff}_p K))$.
\end{coro}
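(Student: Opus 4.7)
Set $\uG := \uG(\rho_\elli)$ and $\uH := \uG(\rho_\elli|\Gal(\ol{\Ff}_p K))$. The plan is to prove the two inclusions separately. One direction is essentially formal: from $\rho_\elli(\Gal(\ol{\Ff}_p K))\subseteq\rho_\elli(\Gal(K))$, taking Zariski closures and passing to identity components yields $\uH\subseteq\uG$, and by Theorem~\ref{Thm-serrebrisgen}(b) the group $\uH$ is semisimple, so $\uH=\cD\uH\subseteq\cD\uG$.

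For the reverse inclusion $\cD\uG\subseteq\uH$ the idea is to verify that $\uH$ is normal in $\uG$ with commutative quotient. Normality is clear: $\Gal(\ol{\Ff}_p K)$ is normal in $\Gal(K)$, so $\rho_\elli(\Gal(K))$ normalizes $\rho_\elli(\Gal(\ol{\Ff}_p K))$ and hence its Zariski closure together with the identity component $\uH$ of that closure; since the normalizer is Zariski closed, $\uG$ normalizes $\uH$. To access the quotient $\uG/\uH$, I would invoke the finite Galois extension $K_\aind/K$ of Theorem~\ref{Thm-connext}(a), which gives $\rho_\elli(\Gal(K_\aind))\subseteq \uG(\Qq_\ell)$ and $\rho_\elli(\Gal(\ol{\Ff}_p K_\aind))\subseteq \uH(\Qq_\ell)$; for the latter one uses the standard fact that the identity component of the Zariski closure is insensitive to passage to an open subgroup, so $\uG(\rho_\elli|\Gal(\ol{\Ff}_p K_\aind))=\uH$. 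Both subgroups are Zariski dense in $\uG$ and $\uH$ respectively, by the same open-subgroup argument.

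The quotient $\rho_\elli(\Gal(K_\aind))/\rho_\elli(\Gal(\ol{\Ff}_p K_\aind))$ is a quotient of $\Gal(\ol{\Ff}_p K_\aind/K_\aind)$, a closed subgroup of $\Gal(\ol{\Ff}_p/\Ff_p)=\widehat{\Zz}$, and therefore abelian. Its image in the connected (hence irreducible) algebraic group $\uG/\uH$ under the quotient morphism is abelian and Zariski dense, so since the commutator is a polynomial morphism vanishing on a dense subset of $\uG/\uH\times\uG/\uH$, it vanishes identically. Thus $\uG/\uH$ is commutative and $\cD\uG\subseteq\uH$. I expect no serious obstacle here; only two routine verifications are needed, namely that passage to an open subgroup does not change the identity component of the Zariski closure, and that the image of a Zariski dense subgroup under a surjective morphism of connected algebraic groups remains Zariski dense.
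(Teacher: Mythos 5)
Your proposal is correct and follows essentially the same route as the paper: normality of $\uG(\rho_\elli|\Gal(\ol{\Ff}_p K))$ in $\uG(\rho_\elli)$ from normality of $\Gal(\ol{\Ff}_p K)$ in $\Gal(K)$, abelianness of the connected quotient because a Zariski dense subgroup of it is a quotient of (a finite-index subgroup of) $\Gal(\ol{\Ff}_p K/K)\cong\hat{\Zz}$, and the reverse inclusion from the semisimplicity of $\uG(\rho_\elli|\Gal(\ol{\Ff}_p K))$ via Theorem~\ref{Thm-serrebrisgen}(b). Your explicit detour through $K_\aind$ is just a more careful spelling-out of the paper's phrase ``a finite index subgroup of $\rho_\elli(\Gal(K))/\rho_\elli(\Gal(\ol{\Ff}_p K))$''.
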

\begin{proof}
Clearly $\rho_\elli(\Gal(K))$ normalizes $\rho_\elli(\Gal(\ol{\Ff}_p K))$. This is preserved under closures, so that $\uG(\rho_\elli|\Gal(\ol{\Ff}_p K))$ is a normal subgroup of $\uG(\rho_\elli)$.
In particular the quotient $\ul{Q}:=\uG(\rho_\elli)/\uG(\rho_\elli|\Gal(\ol{\Ff}_p K))$ is a connected linear algebraic group. Now $\ul Q$ contains as a Zariski dense subset a finite index subgroup of $\rho_\elli(\Gal(K))/\rho_\elli(\Gal(\ol{\Ff}_p K))$, and the latter is a quotient of $ \Gal(\ol\Ff_pK/K)\cong\hat\Zz$ and thus $\ul Q$ is abelian. From the universal property of the derived group, we deduce $\cD\uG(\rho_\elli)\subset\uG(\rho_\elli|\Gal(\ol{\Ff}_p K))$. By Theorem~\ref{Thm-serrebrisgen}(b), we have $\cD\uG(\rho_\elli|\Gal(\ol{\Ff}_p K))=\uG(\rho_\elli|\Gal(\ol{\Ff}_p K))$, and so also $\uG(\rho_\elli|\Gal(\ol{\Ff}_p K))\subset \cD\uG(\rho_\elli)$.
\end{proof}

\section{The sets of Galois generic points}
In addition to the data introduced above let  $k$ be an {\bf infinite}
 finitely generated field and $S/k$  a smooth geometrically connected variety with function field $K$. Assume that $X$ extends to a smooth proper scheme $\XX$ over $S$. Then $\rho_\elli$ factors through  $\pi_1(S)$. As recalled in the introduction, for every  $s\in |S|$ 
the representation $\rho_{\elli, s}$ is isomorphic to the representation of $\Gal(k(s))$ on $H^j(X_{s, \ol{k(s)}},
\Qq_\ell)$ where $X_s=\XX\times_S \Spec(k(s))$ is the special fibre of $\XX$ in~$s$. For our applications below, note that the results of Section~\ref{PrelimSec} also apply to $((\rho_{\elli,s})_{\ell\in\Lll},k(s))$ instead of $((\rho_{\elli})_{\ell\in\Lll},K)$.
%

In this section we group together various results about the sets $S^\gen(\rho_\elli)$ and some consequences.  The following result is due to Serre (cf.~\cite[\S~6.10]{Serre-MW}). We outline the argument.
\begin{lemm}\label{Lem-genbig} 
For any $\ell\in\Lll$ the set $S^\gen(\rho_\elli)$ is Zariski-dense in $S$.
\end{lemm}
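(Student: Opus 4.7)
The plan is to follow Serre's strategy from \cite[\S 6.10]{Serre-MW} via Hilbert irreducibility, after reducing to the case where the monodromy group is connected.

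First I would replace $S$ by the connected finite \'etale cover $S_\aind\to S$ furnished by Theorem~\ref{Thm-connext}(d); by part~(a) of that theorem, $\rho_\elli(\pi_1(S_\aind))\subset \underline G(\rho_\elli)(\Qq_\ell)$. For $s'\in|S_\aind|$ lying over $s\in|S|$, the image $\rho_{\elli,s'}(\Gal(k(s')))$ is of finite index in $\rho_{\elli,s}(\Gal(k(s)))$, so the identity components of their Zariski closures coincide and $\underline G(\rho_{\elli,s'})=\underline G(\rho_{\elli,s})$; similarly $\underline G(\rho_\elli|\pi_1(S_\aind))=\underline G(\rho_\elli)$. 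Since a finite surjection preserves Zariski-density of sets of closed points, it suffices to prove the lemma on $S_\aind$. Writing $\underline G:=\underline G(\rho_\elli)$, we may then assume $\rho_\elli(\pi_1(S))$ is Zariski-dense in $\underline G$.

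For a closed point $s\notin S^\gen(\rho_\elli)$, $\underline G(\rho_{\elli,s})$ is a proper connected closed subgroup of $\underline G$, hence contained in some maximal such subgroup~$H$. By Corollary~\ref{cor-derived}, $\cD\underline G$ is semisimple, and by the structure theory of semisimple groups (Borel--de Siebenthal and Dynkin) there are only finitely many $\underline G(\ol{\Qq_\ell})$-conjugacy classes of such $H$; fix representatives $H_1,\ldots,H_r$ and set $N_i:=N_{\underline G}(H_i)$. Because a Zariski closure normalises its identity component, $\rho_{\elli,s}(\Gal(k(s)))$ is contained in some $\underline G(\Qq_\ell)$-conjugate of $N_i(\Qq_\ell)$ for some $i$; pulling back, $s_*(\Gal(k(s)))$ lies in a conjugate of the closed subgroup $U_i:=\rho_\elli^{-1}(N_i(\Qq_\ell))\subset\pi_1(S)$, which is of infinite index since $N_i\subsetneq\underline G$ and $\rho_\elli(\pi_1(S))$ is Zariski-dense in $\underline G$.

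Finally I would apply Serre's theorem that $S$ is Hilbertian over the finitely generated infinite field~$k$: for any finite collection of non-trivial connected \'etale covers $S_j\to S$, equivalently open finite-index subgroups $V_j\subsetneq\pi_1(S)$, the set of $s\in|S|$ with $s_*(\Gal(k(s)))\not\subset V_j$ for every $j$ is Zariski-dense. Choosing $V_j\supsetneq U_i$ of finite index for each $i$ (and for finitely many representatives of the pertinent conjugacy classes) yields a Zariski-dense set of closed points for which $s_*(\Gal(k(s)))$ avoids every relevant conjugate of some $U_i$, so that $\underline G(\rho_{\elli,s})=\underline G$, i.e.\ $s\in S^\gen(\rho_\elli)$. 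The principal technical difficulty is that bad $s$ correspond to $\underline G(\Qq_\ell)$-conjugates of $N_i$ and such conjugates form an infinite family; the reduction to Hilbert irreducibility relies on observing that the property $\underline G(\rho_{\elli,s})\subsetneq\underline G$ is detected on the image of $s_*(\Gal(k(s)))$ in a suitable finite quotient of $\pi_1(S)$, namely one encoding an appropriate level structure on the homogeneous space $\underline G/N_i$, thereby reducing the problem to finitely many covers of $S$.
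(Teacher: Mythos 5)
There is a genuine gap at the step you yourself flag as the ``principal technical difficulty''. Your reduction produces, for each non-generic $s$, a containment $\rho_{\elli,s}(\Gal(k(s)))\subset gN_i(\Qq_\ell)g^{-1}$ where $g$ ranges over all of $\uG(\Qq_\ell)$ (or even $\uG(\ol{\Qq}_\ell)$), not merely over elements of the monodromy image. Hence the subgroups of $\pi_1(S)$ to be avoided are the groups $\rho_\elli^{-1}(gN_i(\Qq_\ell)g^{-1})$ for arbitrary $g$, and these are \emph{not} $\pi_1(S)$-conjugates of your $U_i=\rho_\elli^{-1}(N_i(\Qq_\ell))$; choosing one finite-index envelope $V_j\supset U_i$ therefore controls only the conjugates of $U_i$ and says nothing about the remaining (infinitely many, pairwise non-conjugate in $\pi_1(S)$) preimages. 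The claim that non-genericity ``is detected on the image of $s_*(\Gal(k(s)))$ in a suitable finite quotient of $\pi_1(S)$'' encoding a ``level structure on $\uG/N_i$'' is precisely the assertion that needs proof, and it is not supplied; as written, the Hilbertianity step does not go through. A second, smaller problem: the finiteness of conjugacy classes of maximal proper connected subgroups via Dynkin/Borel--de Siebenthal requires (at least) that $\uG(\rho_\elli)$ be semisimple, or reductive with small centre; but only the \emph{geometric} restriction $\rho_\elli|\Gal(\ol{\Ff}_pK)$ is known to be semisimple (Theorem~\ref{Thm-serrebrisgen}), so $\uG(\rho_\elli)$ is a priori not even reductive, and for reductive groups with a central torus of dimension $\ge 2$ the finiteness of classes of maximal connected subgroups already fails.

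For comparison, the paper avoids all of this with a Frattini argument: the image $G=\rho_\elli(\pi_1(S))$ is a compact subgroup of $\GL_{V_\ell}(\Qq_\ell)$, hence its Frattini subgroup $\Phi_\ell$ is \emph{open} in $G$ (Serre, together with the structure theory of compact $\ell$-adic analytic groups). Consequently $\rho_{\elli,s}(\Gal(k(s)))=G$ holds as soon as the composite $\ol\rho_{\ell,s}$ into the \emph{finite} group $G/\Phi_\ell$ is surjective, and Hilbertianity of the finitely generated infinite field $k$ gives a Zariski-dense set of closed points $s$ with this surjectivity; such $s$ obviously lie in $S^\gen(\rho_\elli)$. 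Note that this detects the stronger condition of equality of profinite images (not just of connected algebraic monodromy groups), which is exactly what makes a single finite quotient suffice. If you want to salvage your approach, you would need to make the ``finitely many covers'' reduction precise (e.g.\ via a thin-set argument in the style of Serre or Cadoret--Kret), which is substantially more work than the Frattini route.
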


{\em Proof.}
Let $\ell$ be in $\Lll$ and let $\Phi_\ell$ be the Frattini subgroup of $G:=\rho_\elli(\pi_1(S))$, i.e., the intersection of all maximal closed subgroups of $G$. Clearly $G$ is a compact subgroup of $\GL_{V_\ell}(\Qq_\ell)$, and so by \cite[Thm. 8.33., p. 201]{dixon} there is an open pro-$\ell$ subgroup of $G$ of finite rank. We deduce from \cite[\S~6.10~Prop.]{Serre-MW} that $\Phi_\ell$ is open in~$G$.

Consider now the composite homomorphism:
$$\ol{\rho}_\ell\colon \Gal(K) \buildrel\rho_\elli\over\longrightarrow G \to  G/\Phi_\ell.$$
Both $\rho_\elli$ and $\ol\rho_\ell$ factor via $\pi_1(S)$, and by the universal property of the Frattini group, we have  
\begin{equation}\label{Eqn-M}
 \{s\in |S|: \rho_{\elli, s}(\Gal(k(s)))=G\}=\{s\in |S|: \ol{\rho}_{\ell, s}(\Gal(k(s)))=G/\Phi_\ell\}.
\end{equation}
Let $M$ denote the right hand set. Because $G/\Phi$ is finite and $k$ is Hilbertian, the set $M$ is~Zariski dense in $S$. This completes the proof, because $S^\gen(\rho_\elli)$ contains  the left hand side of (\ref{Eqn-M}).
\hfill $\Box$

For the rest of this paper we define, for $s\in |S|$ and $\ell\in\Lll$, the semisimple groups
$$\uD_\elli:=\uG(\rho_\elli|\Gal(\ol{\Ff}_p K))\ \mbox{and}\  
\uD_{\elli, s}:=\uG(\rho_{\elli, s}|\Gal(\ol{\Ff}_p k(s)))$$
over $\Qq_\ell$ and recall that $\uD_\elli=\cD\uG(\rho_\elli)$ and $\uD_{\elli, s}=\cD\uG(\rho_{\elli, s})$.

Because of Theorem \ref{Thm-CompatSys}, the following result follows from  \cite[Thm.~2.4]{lp}.
\begin{thm} (Larsen and Pink) \label{thm-cprop} 
If $\trdeg_{\Ff_p}K=1$, then the functions $\ell\mapsto \dim\uD_\elli$ and $\ell\mapsto \dim\uD_{\elli,s}$ on $\Lll$ are both constant.
\end{thm}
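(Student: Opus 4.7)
The plan is to invoke Larsen and Pink's $\ell$-independence result \cite[Thm.~2.4]{lp}, which asserts precisely that for any strictly compatible system of $\ell$-adic Galois representations over a finitely generated field of transcendence degree at most $1$ over $\Ff_p$, the dimension of the geometric algebraic monodromy group (the identity component of the Zariski closure of the image of the geometric Galois group) is independent of $\ell$. The theorem at hand is essentially a reformulation of this statement in the notation of Section~\ref{PrelimSec}, so the argument should reduce to verifying that the hypotheses are met at both the generic and specialized points.

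For the first assertion, Theorem~\ref{Thm-CompatSys} (Deligne) guarantees that $(\rho_\elli)_{\ell\in\Lll}$ is a strictly compatible system over $K$ pure of weight~$j$. Under the hypothesis $\trdeg_{\Ff_p} K = 1$, \cite[Thm.~2.4]{lp} applies directly and yields the $\ell$-independence of $\dim\uG(\rho_\elli|\Gal(\ol\Ff_p K)) = \dim\uD_\elli$.

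For the second assertion, the preliminary step I would carry out is to show that the specialized family $(\rho_{\elli,s})_{\ell\in\Lll}$ is again a strictly compatible system, now over $k(s)$. This is essentially formal: if $T$ is a smooth $\Ff_p$-variety with function field $K$ through which all $\rho_\elli$ factor, a suitable smooth open subscheme $T_s$ of the reduced closure of $s$ in $T$ serves as a smooth $\Ff_p$-model of $k(s)$, and the characteristic polynomial condition at closed points of $T_s$ is inherited from that of the original system since these are also closed points of $T$. Because $k(s)$ is a finite extension of $k$, we have $\trdeg_{\Ff_p} k(s) \leq \trdeg_{\Ff_p} K = 1$, so \cite[Thm.~2.4]{lp} applies to $(\rho_{\elli,s})_\ell$ and gives the desired $\ell$-independence of $\dim\uD_{\elli,s}$. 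In the degenerate case $\trdeg_{\Ff_p} k(s) = 0$, $k(s)$ is a finite field so $\ol\Ff_p k(s) = \ol\Ff_p$ is algebraically closed, forcing $\uD_{\elli,s}$ to be trivial for every $\ell$ and making the claim vacuous.

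The only real work, and the one point where I expect a small obstacle, lies in this transfer of strict compatibility under specialization: it requires a careful choice of smooth $\Ff_p$-model for $k(s)$ sitting inside a smooth $\Ff_p$-model for $K$ so that Frobenius characteristic polynomials match. Once that is arranged, both halves of the theorem are immediate citations of \cite[Thm.~2.4]{lp}.
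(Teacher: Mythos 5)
Your proposal is correct and matches the paper, which simply observes that the result follows from \cite[Thm.~2.4]{lp} because of Theorem~\ref{Thm-CompatSys}. The only cosmetic difference is in how you get strict compatibility of the specialized system: the paper obtains it directly by applying Deligne's theorem to the smooth proper special fibre $X_s/k(s)$ (as noted at the start of Section~2, all results of Section~\ref{PrelimSec} apply to $((\rho_{\elli,s})_\ell, k(s))$), whereas you re-derive it by a spreading-out/specialization argument inside a model $T$ of $K$ — both are fine.
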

As an application of Lemma~\ref{Lem-genbig} we extend Theorem~\ref{thm-cprop} to all fields $K$ considered here.
\begin{coro}\label{coro-cprop}
The functions $\ell\mapsto \dim\uD_\elli$ and $\ell\mapsto \dim\uD_{\elli,s}$ on $\Lll$ 
are both constant.
\end{coro}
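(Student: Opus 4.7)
The plan is to reduce the constancy of $\dim\uD_\elli$ to the transcendence-degree-one case of Theorem~\ref{thm-cprop} by specializing at a closed point whose residue field has transcendence degree one over $\Ff_p$. Since closed points $s\in S$ have $\trdeg_{\Ff_p}k(s)=\trdeg_{\Ff_p}k$, which in general exceeds one, I will first re-model $K$ over a smaller base field.

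To that end, I pick a transcendence basis $t_1,\ldots,t_d$ of $K/\Ff_p$ (here $d\ge 2$ because $k$ is infinite finitely generated and $\dim S\ge 1$) and let $k_0$ be the algebraic closure of $\Ff_p(t_1)$ in $K$. Since $K$ is finite over $\Ff_p(t_1,\ldots,t_d)$ and $\Ff_p(t_1)$ is algebraically closed in the purely transcendental extension $\Ff_p(t_1,\ldots,t_d)$, the field $k_0$ is finite over $\Ff_p(t_1)$, hence finitely generated, infinite, of transcendence degree one over $\Ff_p$, and algebraically closed in $K$. Write $K=k_0(S_0)$ for a smooth geometrically integral $k_0$-variety $S_0$ of positive dimension, shrinking $S_0$ so that $X/K$ spreads to a smooth proper family over $S_0$ and $(\rho_\elli)_\ell$ factors through $\pi_1(S_0)$. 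Fix a reference prime $\ell_0\in\Lll$. By Lemma~\ref{Lem-genbig} applied to $S_0/k_0$, the set $S_0^\gen(\rho_{\ell_0^\infty})$ is Zariski dense, so I can pick a closed point $s_0$ in it; the residue field $k_0(s_0)$ is finite over $k_0$ and so has transcendence degree one over $\Ff_p$. By the remark at the start of this section, $(\rho_{\elli,s_0})_\ell$ is a strictly compatible system over $k_0(s_0)$, and Theorem~\ref{thm-cprop} then makes $\ell\mapsto\dim\uD_{\elli,s_0}$ constant on $\Lll$.

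To conclude I carry out a symmetric dimension comparison. The condition $s_0\in S_0^\gen(\rho_{\ell_0^\infty})$ gives $\uG(\rho_{\ell_0^\infty,s_0})=\uG(\rho_{\ell_0^\infty})$, which via Corollary~\ref{cor-derived} descends to derived groups as $\uD_{\ell_0^\infty,s_0}=\uD_{\ell_0^\infty}$; for arbitrary $\ell\in\Lll$, specialization gives the inclusion $\uD_{\elli,s_0}\subset\uD_\elli$, so
\[
\dim\uD_{\ell_0^\infty}=\dim\uD_{\ell_0^\infty,s_0}=\dim\uD_{\elli,s_0}\le\dim\uD_\elli.
\]
Choosing instead a closed point in $S_0^\gen(\rho_\elli)$ yields the reverse inequality, and constancy of $\ell\mapsto\dim\uD_\elli$ follows. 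Applying the same argument to the strictly compatible system $(\rho_{\elli,s})_\ell$ over $k(s)$ handles $\uD_{\elli,s}$. I expect the main obstacle to be the auxiliary setup—constructing $k_0$, checking it is finitely generated, and producing a single $S_0$ through which every $\rho_\elli$ simultaneously factors—the dimension-comparison step itself is essentially formal.
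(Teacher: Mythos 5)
Your proof is correct and follows essentially the same route as the paper: reduce to a subfield $\kappa\subset K$ of transcendence degree one over $\Ff_p$ with $K/\kappa$ regular, pick a Galois generic closed point of the resulting model via Lemma~\ref{Lem-genbig}, and transfer constancy from Theorem~\ref{thm-cprop} through the equality $\uD_{\ell_0^\infty,s_0}=\uD_{\ell_0^\infty}$ and the inclusions $\uD_{\elli,s_0}\subset\uD_\elli$. The only cosmetic difference is that you run the comparison symmetrically for two choices of generic point, whereas the paper chooses $\ell_0$ maximizing $\dim\uD_{\ell_0^\infty}$ so that a single inequality suffices (and note that your transcendence basis should be taken to be a separating one to guarantee $K/k_0$ is regular, a detail the paper likewise leaves implicit).
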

{\em Proof.}
Note that it suffices to prove the assertion on $\ell\mapsto \dim\uD_\elli$, since for the second assertion one may take $k(s)$ for $K$ and $\rho_{\elli,s}$ for $\rho_\elli$. Let now $K$ be a finitely generated field over $\Ff_p$. We choose a subfield $\kappa\subset K$ such that $\trdeg_{\Ff_p}\kappa=1$ and $K/\kappa$ is a regular extension of fields. 
Next we choose a geometrically connected smooth $\kappa$-variety $B$ with $\kappa(B)=K$ and a smooth proper morphism $\XX_B\to B$ with generic fibre $X/K$. Let $\ell_0\in\Lll$ be such that $\dim(\uD_{\ell_0^\infty})=\max_{\ell}\dim(\uD_\elli) $. By Lemma~\ref{Lem-genbig} there exists a point $b\in B^\gen(\rho_{\ell_0^\infty})$; note that $\trdeg_{\Ff_p}\kappa(b)=1$. Then for any $\ell\in\Lll$ we have
\[\dim(\uD_\elli)\ge \dim(\cD\uG(\rho_{\elli, b})) \stackrel{\mathrm{Thm.\,}\ref{thm-cprop}}=\dim(\cD\uG(\rho_{\ell_0^\infty, b}))\stackrel{\mathrm{choice\,of\,}b}= \dim(\uD_{\ell_0^\infty}),\]
and it follows from the choice of $\ell_0$ that $\ell\mapsto\dim\uD_\elli$ is constant.
\hfill $\Box$
\begin{rema}
In the above proof, for the reduction from $K$ to transcendence degree $1$, one could also use results on ``space filling curves'', as for instance \cite[Rem.~2.18(ii)]{Drinfeld}, cf.~\cite[Ex.~3.1]{ct}.
\end{rema}

We also need an analog of \cite[3.2.3]{cadoret-kret} for $S^\gen$, as defined here.
\begin{lemm}
\label{Lem-GalGen}
\begin{enumerate}
\item[(a)] For $\bar s\in |S_{\ol\Ff_p k}|$ denote by $\bar s_S$ the closed point of $S$ under $\bar s$. Then
\[S^\gen(\rho_\elli) = \{\bar s_S\mid \bar s\in S_{\ol\Ff_p k}^\gen(\rho_\elli|\pi_1(S_{\ol\Ff_p k})) \}.\]
\item[(b)] If $S'$ is a finite \'etale cover of $S$ and for $s'\in|S'|$ denote by $s'_S$ the closed point of $S$ under $s'$. Then $S^\gen(\rho_\elli) = \{s'_S\mid s'\in (S')^\gen(\rho_\elli |\pi_1(S')) \}$.
\end{enumerate}
\end{lemm}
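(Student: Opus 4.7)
The plan is to reformulate the right-hand side of (a) in terms of $\uD_\elli$ and to reduce (a) to the equivalence
\[\uG(\rho_\elli) = \uG(\rho_{\elli,s})\iff \uD_\elli = \uD_{\elli,s};\]
part (b) will then follow directly from the invariance of the connected component of a Zariski closure under passing to a finite-index subgroup.

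For (a), note first that $\uG(\rho_\elli|\pi_1(S_{\ol\Ff_p k})) = \uD_\elli$, since $\rho_\elli(\pi_1(S_{\ol\Ff_p k})) = \rho_\elli(\Gal(\ol\Ff_p K))$. For any closed point $\bar s$ of $S_{\ol\Ff_p k}$ above $s\in|S|$, the residue field $k(\bar s)$ is a finite extension of $\ol\Ff_p k(s)$, and compatibility of the specialization maps gives $(\rho_\elli|\pi_1(S_{\ol\Ff_p k}))_{\bar s} = \rho_{\elli,s}|\Gal(k(\bar s))$. Since $\Gal(k(\bar s))\subset\Gal(\ol\Ff_p k(s))$ has finite index, we obtain
\[\uG\bigl((\rho_\elli|\pi_1(S_{\ol\Ff_p k}))_{\bar s}\bigr) = \uG\bigl(\rho_{\elli,s}|\Gal(\ol\Ff_p k(s))\bigr) = \uD_{\elli,s},\]
a quantity depending only on $s$. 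Combined with the surjectivity $|S_{\ol\Ff_p k}|\to|S|$, the right-hand side of (a) equals $\{s\in|S|\mid \uD_\elli = \uD_{\elli,s}\}$.

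For the displayed equivalence, the forward direction is Corollary~\ref{cor-derived} applied to both sides. For the converse, set $D := \uD_\elli = \uD_{\elli,s}$, $G := \uG(\rho_\elli)$, $G' := \uG(\rho_{\elli,s})$; then $D\subset G'\subset G$, and the abelianization $A := G/D$ is a connected commutative algebraic group. Inside the quotient of the Zariski closure of $\rho_\elli(\pi_1(S))$ by $D$ (well-defined since $D$ is characteristic in $G$ and hence normal in the closure), let $Q$ and $Q_s$ denote the images of $\rho_\elli(\pi_1(S))$ and $\rho_{\elli,s}(\Gal(k(s)))$ respectively. Tracking the induced map $\Gal(k(s))/\Gal(\ol\Ff_p k(s))\to\pi_1(S)/\pi_1(S_{\ol\Ff_p k})$, a finite-index inclusion of procyclic groups, together with finiteness of the images of the geometric subgroups modulo $D$, one checks that $Q_s\cap A$ has finite index in $Q\cap A$. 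Now $Q\cap A$ is Zariski dense in $A$ by construction of $G$, and a finite-index subgroup of a Zariski dense subset of a connected algebraic group remains Zariski dense (otherwise $A$ would be a finite union of proper closed translates of a smaller subvariety, contradicting the irreducibility of $A$); hence $Q_s\cap A$ is Zariski dense in $A$, forcing $G'/D = A$ and thus $G' = G$.

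Part (b) is a direct finite-index check: for a finite \'etale cover $S'\to S$ and $s'\in|S'|$ above $s = s'_S$, both $\pi_1(S')\subset\pi_1(S)$ and $\Gal(k(s'))\subset\Gal(k(s))$ are finite-index open subgroups, hence $\uG(\rho_\elli|\pi_1(S')) = \uG(\rho_\elli)$ and $\uG(\rho_{\elli,s'}) = \uG(\rho_{\elli,s})$; the defining conditions for $s'\in(S')^\gen(\rho_\elli|\pi_1(S'))$ and $s\in S^\gen(\rho_\elli)$ therefore coincide, and surjectivity of $|S'|\to|S|$ closes the argument. The main delicate step will be the Zariski density claim in the reverse implication of (a), which requires careful bookkeeping with the non-connected quotient and the ``procyclic part'' of the images $Q$ and $Q_s$.
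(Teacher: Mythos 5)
Your proof is correct and follows essentially the same route as the paper: both directions of (a) reduce to the equivalence $\uG(\rho_\elli)=\uG(\rho_{\elli,s})\Leftrightarrow \uD_\elli=\uD_{\elli,s}$, with the forward implication from Corollary~\ref{cor-derived} and the converse via a Zariski-density/finite-index argument in the abelian quotient $\uG(\rho_\elli)/\uD_\elli$ (the paper phrases this by choosing a finite extension $k'/k(s)$ whose geometric Galois image is dense in the connected quotient, which is the same mechanism as your finite-index density claim for $Q_s\cap A$ in $Q\cap A$). Part (b) is the elementary finite-index invariance of $\uG$, which the paper likewise treats as immediate.
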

\begin{proof} We only prove (a), the proof of (b) being elementary. There is a bijection between points in $|S|$ and orbits under $\Gal(\ol\Ff_p k/k)$ in $|S_{\ol\Ff_p k}|$. So let $s$ be in $|S|$ and denote by $\bar s$ a point in $|S_{\ol\Ff_p k}|$ above it. Consider the commutative diagram
\[\xymatrix{
\uD_{\elli,s} \ar@{^{(}->}[r]\ar@{^{(}->}[d]& \uG(\rho_{\elli, s})\ar@{^{(}->}[d]\\
\uD_\elli  \ar@{^{(}->}[r]& \uG(\rho_\elli)\rlap{.}\\
}\]
If $s$ is Galois generic, then the right vertical inclusion is an isomorphism. Hence by Corollary~\ref{cor-derived} the same holds for the left inclusion, and this means that $\bar s$ is Galois generic. Conversely, let $\bar s$ be Galois generic, so that the left vertical inclusion is an isomorphism, and we have an induced monomorphism 
$$\iota_s\colon \uG(\rho_{\elli, s})/ \uD_{\elli,s}\hookrightarrow \uG(\rho_\elli)/ \uD_\elli $$
of algebraic groups (see the proof of Corollary~\ref{cor-derived}).
  Now there exists a finite Galois extension $k'/k(s)$ such that the image of the group $\Gal(\ol{\Ff}_pk'/k')$ is Zariski dense in the connected group $\uG(\rho_{\elli, s})/ \uD_{\elli,s}$. Because $\Gal(\ol{\Ff}_pk(s)/k(s))\hookrightarrow\Gal(\ol{\Ff}_pk/k)$ is of finite index, the group $\Gal(\ol{\Ff}_pk'/k')$ is also Zariski dense in the connected group $\uG(\rho_\elli)/ \uD_\elli$. Hence $\iota_s$ is an isomorphism, and it follows that $s$ is Galois generic.
\end{proof}

\begin{rema}
 Let $\ell$ be in $\Lll$ and denote by $\rho_\elli^\sss$ the semisimplification of $\rho_\elli$. Then from Lemma~\ref{Lem-GalGen}, Theorem~\ref{Thm-serrebrisgen} and Corollary~\ref{cor-derived} it is immediate that $S^\gen(\rho_\elli^\sss)=S^\gen(\rho_\elli)$. 
\end{rema}

\begin{prop} \label{indep} For any two primes  $\ell_1, \ell_2\in\Lll$ we have $S^\gen(\rho_{\ell_1^\infty})=
S^\gen(\rho_{\ell_2^\infty})$. 
\end{prop}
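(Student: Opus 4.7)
The plan is to reduce the Galois-generic condition at a closed point $s$ to a statement about the semisimple groups $\uD_\elli$ and $\uD_{\elli,s}$ that depends on $\ell$ only through a dimension, which is already known to be $\ell$-independent by Corollary~\ref{coro-cprop}.

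First I would apply Lemma~\ref{Lem-GalGen}(a): a closed point $s\in|S|$ lies in $S^\gen(\rho_\elli)$ if and only if any (equivalently, every) lift $\bar s$ to $|S_{\ol\Ff_p k}|$ lies in $S_{\ol\Ff_p k}^\gen(\rho_\elli|\pi_1(S_{\ol\Ff_p k}))$. Since $S$ is geometrically connected, $k$ is algebraically closed in $K$, so $\ol\Ff_p K$ is the function field of $S_{\ol\Ff_p k}$ and the induced map $\Gal(\ol\Ff_p K)\to\pi_1(S_{\ol\Ff_p k})$ is surjective; because $\rho_\elli$ factors through $\pi_1(S)$, this identifies the images under $\rho_\elli$ and yields
\[\uG\bigl(\rho_\elli\,|\,\pi_1(S_{\ol\Ff_p k})\bigr) = \uG\bigl(\rho_\elli\,|\,\Gal(\ol\Ff_p K)\bigr) = \uD_\elli.\]
An analogous identification at a lift $\bar s$ (using that the residue field at $\bar s$ is $\ol\Ff_p k(s)$) gives $\uG((\rho_\elli|\pi_1(S_{\ol\Ff_p k}))_{\bar s})=\uD_{\elli,s}$. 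Hence the condition $s\in S^\gen(\rho_\elli)$ is equivalent to $\uD_{\elli,s}=\uD_\elli$.

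Next I would observe that $\uD_{\elli,s}$ is naturally a closed connected subgroup of the connected algebraic group $\uD_\elli$, and that a proper closed subgroup of a connected algebraic group has strictly smaller dimension. Thus $\uD_{\elli,s}=\uD_\elli$ is equivalent to the numerical condition $\dim\uD_{\elli,s}=\dim\uD_\elli$. By Corollary~\ref{coro-cprop}, both $\ell\mapsto\dim\uD_\elli$ and $\ell\mapsto\dim\uD_{\elli,s}$ are constant on $\Lll$, so this numerical equality either holds for all $\ell\in\Lll$ or for none, giving $S^\gen(\rho_{\ell_1^\infty})=S^\gen(\rho_{\ell_2^\infty})$. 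The only mildly technical point is the identification $\uG(\rho_\elli|\pi_1(S_{\ol\Ff_p k}))=\uD_\elli$ and its analogue at $\bar s$; given the surjectivity of $\Gal(\ol\Ff_p K)\to\pi_1(S_{\ol\Ff_p k})$, these are routine, so the real content is packaged into the dimension input of Corollary~\ref{coro-cprop} (which in turn rests on the $\ell$-independence result of Larsen--Pink).
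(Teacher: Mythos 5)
Your argument is correct and follows essentially the same route as the paper: reduce via Lemma~\ref{Lem-GalGen}(a) to the geometric situation over $S_{\ol\Ff_p k}$, note that $\uD_{\elli,s}\hookrightarrow\uD_\elli$ is an inclusion of connected groups so that equality is detected by dimensions, and invoke the $\ell$-independence of $\dim\uD_\elli$ and $\dim\uD_{\elli,s}$ from Corollary~\ref{coro-cprop}. The identification $\uG(\rho_\elli|\pi_1(S_{\ol\Ff_p k}))=\uD_\elli$ that you spell out is left implicit in the paper but is indeed routine.
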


\begin{proof}
By Lemma~\ref{Lem-GalGen} it suffices to show 
\begin{equation}\label{Eqn-GalGen}
S_{\ol\Ff_p k}^\gen(\rho_{\ell_1^\infty}|\Gal(\ol\Ff_p) K)=S_{\ol\Ff_p k}^\gen(\rho_{\ell_2^\infty}|\Gal(\ol\Ff_p K)). 
\end{equation}
For this let $\bar s$ be in $| S_{\ol\Ff_p k}|$. Observe that for any $\ell\in\Lll$ we have the obvious 
assertion~(a${}_\ell$) that $\uD_{\elli, s} \hookrightarrow \uD_\elli$
is an inclusion of connected semisimple groups and from Corollary~\ref{coro-cprop} the assertion (b) 
that both functions
$\ell\mapsto \dim (\uD_\elli)$ and $\ell\mapsto \dim (\uD_{\elli, s})$ on $\Lll$ are constant.
From these one deduces the following chain of equivalences
\begin{eqnarray*}
\bar s\in S_{\ol\Ff_p k}^\gen(\rho_{\ell_1^\infty}|\Gal(\ol\Ff_pK)) &\stackrel{(a_{\ell_1})}\Longleftrightarrow& 
\dim \uD_{\ell^\infty_1, s}=\dim \uD_{\ell^\infty_1} \\
&\stackrel{(b)}\Longleftrightarrow& 
\dim \uD_{\ell^\infty_2, s}=\dim \uD_{\ell^\infty_2}     \\
&\stackrel{(a_{\ell_2})}\Longleftrightarrow& \bar s\in S_{\ol\Ff_p k}^\gen(\rho_{\ell_2^\infty}|\Gal(\ol\Ff_pK)).
\end{eqnarray*} 
\end{proof}

We define $S^\gen(\XX/S)=S^\gen(\rho_\elli)$ for any $\ell\in\Lll$, and we call $S^\gen( \XX/S)$ the {\em set of Galois generic points 
of $\XX/S$}. 

\section{Adelic openness}
\label{Section-AdelicOpen}
{\em Throughout this section, we shall assume $K=K_\aind$, cf.~Theorem~\ref{Thm-connext}, since the main result of this section is the proof of Theorem~\ref{thm-main-intro}(c) which may be proved over $K_\aind$ by Lemma~\ref{Lem-GalGen}(b).}
\begin{lemm} \label{Lem-serre} 
Let $s$ be in $S^\gen(\XX/S)$. Then for all $\ell\in\Lll$ the group $\rho_{\elli, s}(\Gal(\ol{\Ff}_p k(s)))$ is open in $\rho_{\elli}(\Gal(\ol{\Ff}_p K))$.
\end{lemm}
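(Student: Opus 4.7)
The plan is to exhibit both $\rho_{\elli,s}(\Gal(\ol{\Ff}_p k(s)))$ and $\rho_\elli(\Gal(\ol{\Ff}_p K))$ as open subgroups of the common $\Qq_\ell$-analytic group $\uD_\elli(\Qq_\ell)$, and then to note that an open subgroup of $\uD_\elli(\Qq_\ell)$ that is contained in a larger subgroup is automatically open in that larger subgroup (subspace topology).

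First I would extract the algebraic-group identity on which everything hinges. Since $s\in S^\gen(\XX/S)=S^\gen(\rho_\elli)$, by definition $\uG(\rho_{\elli,s})=\uG(\rho_\elli)$ as subgroups of $\GL_{V_\ell}/\Qq_\ell$. Passing to derived groups and invoking Corollary~\ref{cor-derived} on each side (once for $(\rho_\elli,K)$, once for $(\rho_{\elli,s},k(s))$) yields
\[\uD_{\elli,s}=\cD\uG(\rho_{\elli,s})=\cD\uG(\rho_\elli)=\uD_\elli.\]
This is the unique place where the Galois genericity of $s$ enters the argument.

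Next I would verify the two openness statements. Under the running assumption $K=K_\aind$, Serre's Theorem~\ref{Thm-serrebrisgen}(c) gives directly that $\rho_\elli(\Gal(\ol{\Ff}_p K))$ is open in $\uG(\rho_\elli|\Gal(\ol{\Ff}_p K))(\Qq_\ell)=\uD_\elli(\Qq_\ell)$. For the specialization at $s$, the entire machinery of Section~\ref{PrelimSec} applies to the pure compatible system $(\rho_{\elli,s})_{\ell\in\Lll}$ over the finitely generated field $k(s)$; in particular, Theorem~\ref{Thm-connext} yields a finite Galois extension $k(s)_\aind/k(s)$ for which $\rho_{\elli,s}(\Gal(\ol{\Ff}_p k(s)_\aind))$ is open in $\uD_{\elli,s}(\Qq_\ell)$, again by Theorem~\ref{Thm-serrebrisgen}(c). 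Since $[k(s)_\aind:k(s)]$ is finite, this subgroup has finite index in $\rho_{\elli,s}(\Gal(\ol{\Ff}_p k(s)))$, so the latter is itself open in $\uD_{\elli,s}(\Qq_\ell)=\uD_\elli(\Qq_\ell)$.

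To close the argument I would verify the evident containment $\rho_{\elli,s}(\Gal(\ol{\Ff}_p k(s)))\subset\rho_\elli(\Gal(\ol{\Ff}_p K))$. The specialization $s_*\colon\Gal(k(s))\to\pi_1(S)$ maps $\Gal(\ol{\Ff}_p k(s))$ into the kernel $\pi_1(S_{\ol{\Ff}_p k})$ of $\pi_1(S)\to\Gal(\ol{\Ff}_p k/k)$, because every element of $\Gal(\ol{\Ff}_p k(s))$ fixes $\ol{\Ff}_p\subset\ol{\Ff}_p k(s)$; combining with the surjection $\Gal(\ol{\Ff}_p K)\twoheadrightarrow\pi_1(S_{\ol{\Ff}_p k})$ gives the containment inside $\uD_\elli(\Qq_\ell)$. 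I do not anticipate any serious obstacle here: the substance of the lemma lies entirely in the identification $\uD_{\elli,s}=\uD_\elli$ forced by Galois genericity and Corollary~\ref{cor-derived}, while the openness on each side is immediate from Serre's theorem applied to $K$ and to $k(s)$, after a harmless passage to the auxiliary extensions $K_\aind$ and $k(s)_\aind$.
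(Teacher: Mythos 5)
Your argument is correct and is essentially the paper's own proof: both hinge on the identification $\uD_{\elli,s}=\uD_\elli$ forced by Galois genericity of $s$ (via Corollary~\ref{cor-derived}) together with Serre's Theorem~\ref{Thm-serrebrisgen}(c) applied over $K=K_\aind$ and over $k(s)$, which makes both images open in $\uD_\elli(\Qq_\ell)$, whence one is open in the other. The details you spell out (passing to $k(s)_\aind$ and using finite index, and the containment $\rho_{\elli,s}(\Gal(\ol{\Ff}_p k(s)))\subset\rho_\elli(\Gal(\ol{\Ff}_p K))$ via $s_*$) are exactly the steps the paper leaves implicit.
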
 

\begin{proof}
Because $K=K_\aind$, the group $\rho_{\elli}(\Gal(\ol{\Ff}_p K))$ lies in $\uD_{\elli}(\Qq_\ell)$, and hence so does $\rho_{\elli, s}(\Gal(\ol{\Ff}_p k(s)))\subset \rho_{\elli}(\Gal(\ol{\Ff}_p K))$. By our choice of $s$ we have $\uD_{\elli}=\uD_{\elli,s}$, and thus by Theorem \ref{Thm-serrebrisgen}(c), both $\rho_{\elli}(\Gal(\ol{\Ff}_p K))$  and $\rho_{\elli, s}(\Gal(\ol{\Ff}_p k(s)))$ are open in $\uD_{\elli}(\Qq_\ell)$. This implies the asserted openness and completes the proof.
\end{proof}

Note that $\rho_\elli$ has its image in $\GL_{T_\ell(X)}(\Zz_\ell)$. 
Let $\cD_\elli/\Zz_\ell$ be the Zariski closure of $\uD_\elli$ in $\GL_{T_\ell(X)}$. 
The following result is powered by two theorems from a recent paper of Cadoret, Hui and Tamagawa (cf.~\cite{ctss}).

\begin{thm} 
\label{thm-open1}
For all $\ell\gg 0$ we have:
\begin{enumerate}
\item[(a)] The group scheme $\cD_\elli/\Zz_\ell$ is semisimple.
\item[(b)] If $s\in S^\gen(\XX/S)$, then we have
$$\rho_{\elli, s}(\Gal(\ol{\Ff}_p k(s)))^+=\rho_{\elli}(\Gal(\ol{\Ff}_p K))=\cD_\elli(\Zz_\ell)^+.$$
\end{enumerate}
\end{thm}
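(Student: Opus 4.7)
The plan for part~(a) is to apply directly the integral semisimplicity theorem of Cadoret--Hui--Tamagawa from \cite{ctss}: for a strictly compatible geometric system the scheme-theoretic closure in $\GL_{T_\ell(X)}$ of the derived group of the Zariski closure of the image of the geometric Galois representation is, for all $\ell\gg 0$, a semisimple group scheme over $\Zz_\ell$. Theorem~\ref{Thm-CompatSys} supplies the strict compatibility hypothesis, and by Theorem~\ref{Thm-serrebrisgen}(b) together with Corollary~\ref{cor-derived} the resulting closure is precisely $\cD_\elli$.

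For the middle equality in (b), the plan is to invoke the second input from \cite{ctss}: for $\ell\gg 0$ one has $\rho_\elli(\Gal(\ol{\Ff}_p K))^+ = \cD_\elli(\Zz_\ell)^+$. Since by the standing assumption $K=K_\aind$, Theorem~\ref{Thm-connext}(b) gives $\rho_\elli(\Gal(\ol{\Ff}_p K))=\rho_\elli(\Gal(\ol{\Ff}_p K))^+$ for $\ell\gg 0$, and combining these yields $\rho_\elli(\Gal(\ol{\Ff}_p K)) = \cD_\elli(\Zz_\ell)^+$, as required.

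For the first equality in (b), the strategy is to apply the same image theorem from \cite{ctss} to the specialised compatible system $(\rho_{\elli,s})_{\ell\in\Lll}$ over $k(s)$, which is itself strictly compatible by Theorem~\ref{Thm-CompatSys} (and to which the full Section~\ref{PrelimSec} applies, as noted in the text). This would yield $\rho_{\elli,s}(\Gal(\ol{\Ff}_p k(s)))^+ = \cD_{\elli,s}(\Zz_\ell)^+$ for $\ell\gg 0$, where $\cD_{\elli,s}$ denotes the Zariski closure of $\uD_{\elli,s}$ in $\GL_{T_\ell(X_s)}$. It then remains to identify $\cD_{\elli,s}$ with $\cD_\elli$: the hypothesis $s\in S^\gen(\XX/S)$ gives $\uD_{\elli,s}=\uD_\elli$ as subgroups of $\GL_{V_\ell}/\Qq_\ell$, while smooth proper base change supplies a $\Gal(\ol{\Ff}_p k(s))$-equivariant $\Zz_\ell$-isomorphism $T_\ell(X)\cong T_\ell(X_s)$. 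Under this identification, both $\cD_\elli$ and $\cD_{\elli,s}$ are $\Zz_\ell$-flat closed subgroup schemes of $\GL_{T_\ell(X)}$ with common generic fibre $\uD_\elli$, and any two such schemes must coincide by uniqueness of the scheme-theoretic closure.

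The main obstacle I anticipate is the uniformity of the threshold ``$\ell\gg 0$'' as $s$ varies over $S^\gen(\XX/S)$: pointwise application of the image theorem of \cite{ctss} to each specialisation would a~priori produce a threshold depending on $s$. One therefore either invokes a form of the theorem that is uniform over the specialisations of the fixed compatible system on $\pi_1(S)$, or one checks that the threshold depends only on invariants (such as the rank, the ambient dimension and the field generated by the Frobenius traces) that are controlled uniformly along specialisation. This is where I expect the genuine technical subtlety to lie; the remaining ingredients are assembled routinely from Theorem~\ref{Thm-connext}, Theorem~\ref{Thm-serrebrisgen}, Corollary~\ref{cor-derived} and Lemma~\ref{Lem-serre}.
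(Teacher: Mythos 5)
Your proposal follows essentially the same route as the paper: part (a) is quoted directly from \cite{ctss}, and both equalities in (b) are obtained by applying the maximality theorem of \cite{ctss} once to the system over $K$ (combined with $K=K_\aind$ and Theorem~\ref{Thm-connext}(b)) and once to its specialization at $s$, identifying $\cD_{\elli,s}$ with $\cD_\elli$ via $\uD_{\elli,s}=\uD_\elli$ exactly as you describe. The uniformity of the threshold in $s$ that you flag as the main obstacle is in fact not needed: the theorem is only invoked for a single fixed $s$ (in Corollary~\ref{cor-for-main}), so the pointwise-in-$s$ threshold that your argument produces is exactly what the paper uses.
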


\begin{proof} Part (a) is immediate from  \cite[Thm. 1.2]{ctss} and \cite[Cor. 7.5]{ctss}. For part (b), let $s\in S^\gen(\XX/S)$, so that $\uD_{\elli, s}=\uD_{\elli}$. 
Then $\cD_{\elli}$ is also the Zariski closure of $\rho_{\elli, s}(\Gal(\ol{\Ff}_p k(s))$ in $\GL_{T_\ell(X)}$. We now apply \cite[7.3]{ctss} twice in order to get
$$\begin{array}{rcl}
\cD_\elli(\Zz_\ell)^+&=&\rho_\elli(\Gal(\ol{\Ff}_p K))^+ \ = \ \rho_{\ell^\infty}(\Gal(\ol{\Ff}_p K))\ \mbox{ and}\\
\cD_\elli(\Zz_\ell)^+&=&\rho_\elli(\Gal(\ol{\Ff}_p k(s)))^+.
\end{array}
$$
\end{proof}

\begin{coro}\label{cor-for-main}
Consider the adelic representation
$$\rho: \Gal(K)\to \prod_{\ell\in\Lll} \GL_{T_\ell(A)} (\Zz_\ell).$$
For every $s\in S^\gen(\XX/S)$ the group $\rho_s(\Gal(\ol{\Ff}_p k(s)))$ is open in $\rho (\Gal(\ol{\Ff}_p K))$.
\end{coro}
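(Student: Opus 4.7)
The plan is to exhibit, inside $\rho_s(\Gal(\ol{\Ff}_p k(s)))$, a subgroup that is manifestly open in $\rho(\Gal(\ol{\Ff}_p K))$. The key idea is to apply Theorem~\ref{Thm-connext} twice: once to the compatible system $(\rho_\elli)_{\ell\in\Lll}$ over $K$, which via part~(c) and the standing assumption $K=K_\aind$ yields
\[\rho(\Gal(\ol{\Ff}_p K))=\prod_{\ell\in\Lll} \rho_\elli(\Gal(\ol{\Ff}_p K));\]
and once to the specialized system $(\rho_{\elli,s})_{\ell\in\Lll}$ over the finitely generated infinite field $k(s)$, producing a finite Galois extension $k(s)_\aind/k(s)$ such that
\[\rho_s(\Gal(\ol{\Ff}_p k(s)_\aind))=\prod_{\ell\in\Lll} \rho_{\elli,s}(\Gal(\ol{\Ff}_p k(s)_\aind))\]
by part~(c), and such that $\rho_{\elli,s}(\Gal(\ol{\Ff}_p k(s)_\aind))=\rho_{\elli,s}(\Gal(\ol{\Ff}_p k(s)))^+$ for all sufficiently large $\ell$ by part~(b).

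Next I would invoke Theorem~\ref{thm-open1}(b), which applies because $s\in S^\gen(\XX/S)$, to conclude that for $\ell\gg 0$
\[\rho_{\elli,s}(\Gal(\ol{\Ff}_p k(s)))^+=\rho_\elli(\Gal(\ol{\Ff}_p K)).\]
Combining the three displays, there is a finite set $T\subset\Lll$ such that the $\ell$-th factors of the adelic images
\[\rho_s(\Gal(\ol{\Ff}_p k(s)_\aind))\subseteq \rho(\Gal(\ol{\Ff}_p K))\]
coincide for every $\ell\notin T$. At the finitely many $\ell\in T$, Lemma~\ref{Lem-serre} ensures that $\rho_{\elli,s}(\Gal(\ol{\Ff}_p k(s)))$ is open in $\rho_\elli(\Gal(\ol{\Ff}_p K))$, and since $[k(s)_\aind:k(s)]<\infty$, its finite-index subgroup $\rho_{\elli,s}(\Gal(\ol{\Ff}_p k(s)_\aind))$ is open as well.

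The product shape of $\rho_s(\Gal(\ol{\Ff}_p k(s)_\aind))$ therefore exhibits it as an open subgroup of $\rho(\Gal(\ol{\Ff}_p K))$. Since $\rho_s(\Gal(\ol{\Ff}_p k(s)_\aind))\subseteq\rho_s(\Gal(\ol{\Ff}_p k(s)))$, the latter contains an open subgroup of $\rho(\Gal(\ol{\Ff}_p K))$ and is thus open itself, which establishes the corollary. The step I expect to require most care is the second application of Theorem~\ref{Thm-connext}, to the specialized system: mere component-wise openness, as afforded by Lemma~\ref{Lem-serre}, cannot on its own imply adelic openness without the product decomposition at almost all primes supplied by part~(c), together with its matching with the generic image via Theorem~\ref{thm-open1}(b).
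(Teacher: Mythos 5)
Your proposal is correct and follows essentially the same route as the paper's own proof: two applications of Theorem~\ref{Thm-connext} (to the generic and the specialized systems) to obtain product decompositions, Theorem~\ref{thm-open1}(b) together with part~(b) of Theorem~\ref{Thm-connext} to match the factors at almost all $\ell$, and Lemma~\ref{Lem-serre} for the remaining finitely many primes, concluding via the product topology. Your closing remark correctly identifies the key point that componentwise openness alone would not suffice.
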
 

{\em Proof.} Let $G=\Gal(\ol{\Ff_p} K)$ and $G_s=\Gal(\ol{\Ff_p} k(s))$. Then 
$$\rho(G)=\prod_{\ell\in\Lll} \rho_{\elli}(G)$$
by Theorem~\ref{Thm-connext} because $K=K_\aind$. Furthermore, again by Theorem~\ref{Thm-connext}, there exists an open normal subgroup $H_s=\Gal(\overline\Ff_pk(s)_\aind)$ of $G_s$ such that 
$$\rho_s(H_s)=\prod_{\ell\in \Lll} \rho_{\elli, s} (H_s).$$
For every prime number $\ell\in \Lll$ the group $\rho_{\elli, s} (H_s)$ is open in
$\rho_{\elli, s} (G_s)$ (because $H_s$ is open in $G_s$), and $\rho_{\elli, s} (G_s)$ is open in 
$\rho_\elli(G)$ by Lemma~\ref{Lem-serre}. It follows that $\rho_{\elli, s} (H_s)$ is open in 
$\rho_\elli(G)$ for all $\ell\in\Lll$. 

By Theorem~\ref{thm-open1} we have $\rho_{\elli, s}(G_s)^+=\rho_{\elli}(G)$ for all $\ell\gg 0$ in $\Lll$ because $K=K_\aind$. From our construction of $H_s$ via  Theorem \ref{Thm-connext}, we deduce $\rho_{\elli, s}(H_s)=\rho_{\elli, s}(G_s)^+$ for all $\ell\gg0$ in $\Lll$, and hence $\rho_{\elli, s} (H_s)=\rho_\elli(G)$ for these $\ell$. By the definition of the product topology, the group $\rho_s(H_s)$ is open in $\rho(G)$. As $\rho_s(H_s)\subset \rho_s(G_s)\subset \rho(G)$, the assertion follows.\hfill $\Box$

\begin{thm} 
For $s\in S^\gen(\XX/S)$ the group $\rho_s(\Gal(k(s)))$ is open in $\rho (\Gal(K))$.
\label{mainthm} 
\end{thm}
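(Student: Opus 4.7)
The plan is to bootstrap from Corollary~\ref{cor-for-main}, which provides the openness of the ``geometric'' part $\rho_s(\Gal(\ol{\Ff}_p k(s)))$ inside $\rho(\Gal(\ol{\Ff}_p K))$, to the full arithmetic statement. The mechanism is that the quotients by the geometric parts are procyclic, with the natural comparison map between them of finite index.

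Abbreviate $G := \rho(\Gal(K))$, $\tilde H := \rho(\Gal(\ol{\Ff}_p K))$, $G_s := \rho_s(\Gal(k(s)))$ and $\tilde H_s := \rho_s(\Gal(\ol{\Ff}_p k(s)))$. By Corollary~\ref{cor-for-main}, $\tilde H_s$ is open in $\tilde H$. First I would observe that $s_*\colon \Gal(k(s))\to \pi_1(S)$ carries $\Gal(\ol{\Ff}_p k(s))$ into the image of $\Gal(\ol{\Ff}_p K)$, so that $\tilde H_s\subseteq \tilde H$, and that the induced map on quotients is identified, up to conjugation, with the inclusion $\Gal(\ol{\Ff}_p/\Ff_{q(s)}) \hookrightarrow \Gal(\ol{\Ff}_p/\Ff_{q(k)})$ of procyclic groups, of finite index $[\Ff_{q(s)}:\Ff_{q(k)}]$, where $\Ff_{q(-)}$ denotes the constant field. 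Composing with the natural surjection $\Gal(K)/\Gal(\ol{\Ff}_p K)\twoheadrightarrow G/\tilde H$, the image of $G_s$ in $G/\tilde H$ is then a closed subgroup of finite index, hence open; equivalently, $G_s\tilde H$ is open in $G$.

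It then remains to verify that $G_s$ itself is open in $G_s\tilde H$. The second isomorphism theorem gives
$$G_s \tilde H / G_s \;\cong\; \tilde H / (\tilde H \cap G_s),$$
and since $\tilde H_s\subseteq \tilde H\cap G_s$ is already open in $\tilde H$, the right-hand side is finite. Thus $G_s$ has finite index in $G_s\tilde H$; being compact, it is closed in $G$ and a fortiori in $G_s\tilde H$, and a closed subgroup of finite index in a topological group is open. Concatenating with the previous paragraph gives that $G_s$ is open in $G$, as required.

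I do not anticipate any serious obstacle: the entire argument is a formal manipulation of short exact sequences of topological groups, once Corollary~\ref{cor-for-main} (which rests on the deep input from \cite{ctss}) is in hand, and it relies in an essential way on the section-wide assumption $K=K_\aind$ so that $\tilde H$ and $\tilde H_s$ lie in a single connected semisimple algebraic group where Serre-type openness applies. A minor technical subtlety is that $s_*$ is defined only up to conjugation, but the whole assertion is conjugation-invariant so this causes no difficulty.
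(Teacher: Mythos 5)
Your argument is correct and is essentially the paper's own proof: both reduce to the two inputs that $\rho_s(\Gal(\ol{\Ff}_p k(s)))$ is open in $\rho(\Gal(\ol{\Ff}_p K))$ (Corollary~\ref{cor-for-main}) and that the induced map on the procyclic quotients has open image because $k(s)/k$ is finite. You merely make explicit, via the index identity $[G_s\tilde H:G_s]=[\tilde H:\tilde H\cap G_s]$, the formal ``openness in sub and quotient implies openness'' step that the paper leaves as ``it follows''.
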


\begin{proof}
Let $\bar s\in |S_{\ol\Ff_pk}|$ be above $s\in S^\gen(\XX/S)$, and consider the following commutative diagram with exact rows, where $\ol\rho$ is induced from $\rho$:
$$\xymatrix{1\ar[r] & \rho(\pi_1(S_{\ol\Ff_p k})) \ar[r] & \rho(\pi_1(S)) \ar[r] & \rho(\pi_1(S))/\rho(\pi_1(S_{\ol\Ff_p k})) \ar[r] & 1 \\
1\ar[r] & \pi_1(S_{\ol\Ff_p k}) \ar[r]\ar[u]_\rho & \pi_1(S)\ar[r]\ar[u]_\rho &  \Gal(\ol{\Ff_p} k/k) \ar[u]_{\ol\rho}\ar[r] & 1 \\
1\ar[r] & \Gal(\ol{\Ff_p} k(s)) \ar[r]\ar[u]_{\bar s_*} &\Gal(k(s)) \ar[r]\ar[u]_{s_*} &  \Gal(\ol{\Ff_p} k(s) /k(s)) \ar[u]_i\ar[r] & 1 
}$$
Now $\rho_s(\Gal(\ol{\Ff_p} k(s)))$ is open in $\rho(\pi_1(S_{\ol\Ff_p k}))$ by Corollary~\ref{cor-for-main}. Furthermore $\ol\rho(\Gal(\ol{\Ff_p} k(s) /k(s)) )$ is open in $\rho(\pi_1(S))/\rho(\pi_1(S_{\ol\Ff_p k}))$, because $k(s)/k$ is finite and $\ol{\rho}$ is surjective. It follows that $\rho_s(\Gal(k(s)))$ is open in $\rho (\Gal(K))$.
\end{proof}

\section{Largeness in the sense of Hui-Larsen} \label{LHSection}
Throughout this section we assume $K=K_\aind$ and fix a Galois generic point $s\in S^\gen(\XX/S)$. 
Let $p_\elli: \uD_\elli^\scc \to \uD_\elli$ be the simply connected cover of the semisimple $\Qq_\ell$-group
$\uD_\elli$.
Consider the groups
$$\begin{array}{rcl}
\uD_\Aa&:=&\prod\limits_{\ell\in\Lll} \uD_\elli(\Qq_\ell),\\
\uD_\Aa^\scc&:=&\prod\limits_{\ell\in\Lll} \uD_\elli^\scc(\Qq_\ell),\\
\Gamma&:=&\rho(\Gal(\ol{\Ff}_p K)\subset \uD_\Aa, \\
\Gamma_s&:=&\rho_s(\Gal(\ol{\Ff}_p k(s)))\subset \uD_\Aa.\end{array}
$$ 
The aim of this section is to prove that $\Gamma_s$ is a subgroup of 
$\uD_\Aa$ which is ``large'' in the sense of Hui and Larsen (cf.~\cite[Section 2]{huilarsen}). 
This shows that an 
analogue of conjecture \cite[Conj. 1.3]{huilarsen} for abelian varieties in positive characteristic holds.
Denote by $\kappa_\elli: \uD_\elli\times \uD_\elli\to \uD_\elli^{\scc}$ the commutator map and by 
$\kappa: \uD_\Aa\times \uD_\Aa\to \uD_\Aa^\scc$ the map derived from the $\kappa_\elli$. 

\begin{thm} \label{LHTheorem} $\kappa(\Gamma_s, \Gamma_s)$ (resp. $\kappa(\Gamma, \Gamma)$) generates a special adelic 
(cf.~\cite[Section 2]{huilarsen})  subgroup of $\uD_\Aa^\scc$ which is equal to $\kappa(\Gamma_s, \Gamma_s)^u$ 
(resp. to $\kappa(\Gamma, \Gamma)^u$) for some $u$. Moreover 
$\kappa(\Gamma_s, \Gamma_s)^2$  (resp. $\kappa(\Gamma, \Gamma)^2$) contains a special adelic subgroup of $\uD_\Aa^\scc$. 
\end{thm}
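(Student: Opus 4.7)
The plan is to combine the local (at each prime) openness results with the integral structure from Theorem~\ref{thm-open1} for $\ell \gg 0$, and then invoke the adelic decomposition from Theorem~\ref{Thm-connext} to assemble a special adelic subgroup, matching the largeness criteria of Hui-Larsen.

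First, I would handle the local picture one prime at a time. Let $\Gamma_\elli := \rho_\elli(\Gal(\ol{\Ff}_p K))$ and $\Gamma_{\elli,s} := \rho_{\elli, s}(\Gal(\ol{\Ff}_p k(s)))$ denote the $\ell$-components of $\Gamma$ and $\Gamma_s$. For every $\ell \in \Lll$, Lemma~\ref{Lem-serre} together with the hypothesis $s \in S^\gen(\XX/S)$ gives that both $\Gamma_\elli$ and $\Gamma_{\elli,s}$ are open in the semisimple group $\uD_\elli(\Qq_\ell)$. Since the kernel of $p_\elli$ is central, the commutator map $\kappa_\elli$ is well-defined, and its image is Zariski dense in the simply connected semisimple group $\uD_\elli^\scc$. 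Consequently, for any open subgroup $\Lambda \subset \uD_\elli(\Qq_\ell)$, the subgroup of $\uD_\elli^\scc(\Qq_\ell)$ generated by $\kappa_\elli(\Lambda,\Lambda)$ is open.

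Next, I would import the integral structure for large $\ell$. By Theorem~\ref{thm-open1}, for $\ell \gg 0$ the group scheme $\cD_\elli/\Zz_\ell$ is semisimple and $\Gamma_{\elli,s}^+ = \Gamma_\elli = \cD_\elli(\Zz_\ell)^+$. The simply connected cover $\cD_\elli^\scc$ likewise extends to a semisimple $\Zz_\ell$-scheme. Using bounded commutator width for Chevalley groups over $\Zz_\ell$ (commutator width at most $2$ for $\ell \gg 0$, in the manner exploited in \cite[Section 2]{huilarsen}), one obtains $\kappa_\elli(\Gamma_{\elli,s},\Gamma_{\elli,s})^2 \supseteq \cD_\elli^\scc(\Zz_\ell)$ for almost all $\ell$, and that $\kappa_\elli(\Gamma_{\elli,s},\Gamma_{\elli,s})^{u}$ exhausts the open subgroup it generates for a suitable uniform constant $u$.

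For the adelic step, applying Theorem~\ref{Thm-connext}(c) with $K$ replaced by $k(s)$ yields a finite extension $k(s)_\aind/k(s)$ such that $\rho_s(\Gal(\ol{\Ff}_p k(s)_\aind)) = \prod_{\ell \in \Lll} \rho_{\elli, s}(\Gal(\ol{\Ff}_p k(s)_\aind))$. This finite-index product subgroup of $\Gamma_s$ supplies the desired adelic decomposition. Combining with the local conclusions, $\kappa(\Gamma_s,\Gamma_s)^2$ contains an open subgroup of $\uD_\Aa^\scc$ whose $\ell$-component is $\cD_\elli^\scc(\Zz_\ell)$ for almost all $\ell$, which is exactly a special adelic subgroup in the sense of \cite[Section 2]{huilarsen}. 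Taking a large enough $u$ additionally exhausts the finitely many small primes and gives $\kappa(\Gamma_s,\Gamma_s)^u$ as the full subgroup generated by $\kappa(\Gamma_s,\Gamma_s)$. The corresponding statement for $\Gamma$ follows by the identical argument. The main obstacle I anticipate is the uniform-in-$\ell$ control of commutator width for $\cD_\elli^\scc(\Zz_\ell)$, together with the passage between $\Gamma_{\elli,s}^+$ and $\Gamma_{\elli,s}$; these depend on structural results for compact $\ell$-adic analytic groups and Chevalley groups over local rings, and the finite index of the $+$-subgroup must be absorbed uniformly into the constant $u$.
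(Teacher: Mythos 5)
Your preparatory steps coincide with the paper's: openness of $\Gamma_{\elli,s}$ in $\uD_\elli(\Qq_\ell)$ at every $\ell$ via Lemma~\ref{Lem-serre}, a product subgroup $\prod_{\ell\in\Lll}\Gamma_{\elli,s}'\subset\Gamma_s$ over a finite extension of $\ol{\Ff}_pk(s)$ via Theorem~\ref{Thm-connext} (the paper uses \cite[Thm.~1.2]{bgp} directly), and the integral statements of Theorem~\ref{thm-open1} for $\ell\gg0$. Where you diverge is the endgame: the paper does \emph{not} re-derive the adelic assembly; it checks the hypotheses of \cite[Thm.~3.8]{huilarsen} --- Zariski density of the projections (from Galois genericity), openness at each $\ell$, and, crucially, that $p_\elli^{-1}(\Gamma_{\elli,s})$ is a \emph{hyperspecial maximal compact} subgroup of $\uD_\elli^\scc(\Qq_\ell)$ for $\ell\gg0$, which is obtained from semisimplicity of $\cD_\elli/\Zz_\ell$ together with \cite[Cor.~8.2]{ctss} --- and then cites that theorem, which delivers all three assertions (special adelic subgroup, the uniform exponent $u$, and the $\kappa^2$ statement) at once.

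Your plan instead proposes to prove by hand exactly the content of \cite[Thm.~3.8]{huilarsen}, and the hard parts are left as assertions: (i) the uniform-in-$\ell$ commutator width $\le 2$ for $\cD_\elli^\scc(\Zz_\ell)$, which you yourself flag as the main obstacle; (ii) the step $\kappa_\elli(\Gamma_{\elli,s},\Gamma_{\elli,s})^2\supseteq\cD_\elli^\scc(\Zz_\ell)$ implicitly requires that the image of $\cD_\elli^\scc(\Zz_\ell)\to\cD_\elli(\Zz_\ell)$ lies inside $\Gamma_{\elli,s}$ (equivalently the hyperspecial statement the paper gets from \cite[Cor.~8.2]{ctss}); knowing only $\Gamma_{\elli,s}^+=\cD_\elli(\Zz_\ell)^+$ does not give this without an argument comparing $\cD_\elli(\Zz_\ell)^+$ with $p_\elli(\cD_\elli^\scc(\Zz_\ell))$; (iii) at the finitely many small primes you assert, but do not prove, that finitely many products of values of $\kappa_\elli$ on an open subgroup contain an open compact subgroup and that a single $u$ suffices; and (iv) it is not shown that the subgroup generated by $\kappa(\Gamma_s,\Gamma_s)$ is literally a product $\prod_\ell U_\ell$, i.e.\ special adelic, rather than merely a subgroup of the product with open projections. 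All of (i)--(iv) are precisely what \cite[Thm.~3.8]{huilarsen} encapsulates, so the efficient repair is the paper's route: verify its hypotheses (adding the \cite[Cor.~8.2]{ctss} input you omitted) and invoke it, rather than reconstructing the commutator-width and assembly arguments.
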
 

{\em Proof.} Denote by $\mathrm{pr}_\elli: \uD_\Aa\to \uD_\elli$ the projection on the $\ell$-th factor of the product. Note that $\mathrm{pr}_\elli(\Gamma_s)=\rho_{\elli, s}(\Gal(\ol{\Ff}_p k(s)))$ is  Zariski dense in $\uD_\elli$ 
for each $\ell\in\Lll$ because $s$ is Galois generic. 
By \cite[Thm. 1.2]{bgp} there exists a finite extension $F/\overline{\Ff}_p k(s)$ such that 
$$\rho_s(\Gal(F))=\prod_{\ell\in\Lll} \rho_{\elli, s}(\Gal(F)).$$
Thus, if we define $\Gamma_{\elli, s}:=  \rho_{\elli, s}(\Gal(F))$, then $\prod_{\ell\in\Lll} \Gamma_{\elli, s}\subset
\Gamma_s\subset \Gamma$. For each $\ell\in \Lll$ the group $\Gamma_{\elli, s}$ is open in $\uD_\elli(\Qq_\ell)$ by 
Lemma~\ref{Lem-serre}. Furthermore, as $\cD_\elli$ is semisimple (cf.~Theorem~\ref{thm-open1}) for $\ell\gg 0$, 
\cite[Cor. 8.2]{ctss} implies that the group $\mathrm{pr}_\elli^{-1}(\Gamma_{\elli, s})$ is a hyperspecial 
maximal compact subgroup of $D_\elli^{sc}(\Qq_\ell)$ for $\ell\gg 0$. Thus the assertion follows by \cite[Thm. 3.8]{huilarsen}. \hfill $\Box$

{\small {\sc Gebhard B{\" o}ckle\\
Computational Arithmetic Geometry\\
IWR (Interdisciplinary Center for Scientific Computing)\\
University of Heidelberg\\
Im Neuenheimer Feld 368\\
69120 Heidelberg, Germany}\\
E-mail address: \texttt{\small gebhard.boeckle@iwr.uni-heidelberg.de} 
\par\medskip

{\sc Wojciech Gajda\\
Faculty of Mathematics and Computer Science\\
Adam Mickiewicz University\\
Umultowska 87\\
61614 Pozna\'{n}, Poland}\\
E-mail adress: \texttt{\small gajda@amu.edu.pl}
\par\medskip

{\sc Sebastian Petersen\\ 
Universit\"at Kassel\\
Fachbereich 10\\
Wilhelmsh\"oher Allee 73\\
34121 Kassel, Germany}\\
E-mail address: \texttt{\small petersen@mathematik.uni-kassel.de}}
\end{document}